\newtheorem{theorem}{Theorem}
\newtheorem{lemma}{Lemma}
\newtheorem{proposition}{Proposition}
\newtheorem{corollary}{Corollary}
\definecolor{badcolor}{HTML}{FF5733}
\definecolor{goodcolor}{HTML}{2196F3}
\definecolor{greatcolor}{HTML}{0072B2}
\newcommand{\real}{\mathbb{R}}
\DeclareMathOperator{\diag}{diag}
\newcommand{\mat}[1]{\boldsymbol{#1}}
\renewcommand{\vec}[1]{\boldsymbol{#1}}
\newcommand{\norm}[1]{\mleft\| #1 \mright\|}
\newcommand{\Id}{\mathbf{I}}
\DeclareMathOperator{\expect}{\mathbb{E}}
\DeclareMathOperator{\prob}{\mathbb{P}}
\DeclareMathOperator*{\argmin}{argmin}
\newcommand{\set}[1]{\mathsf{#1}}
\renewcommand{\hat}[1]{\widehat{#1}}
\renewcommand{\tilde}[1]{\widetilde{#1}}
\newcommand{\cmark}{\ding{51}}%
\newcommand{\xmark}{\ding{55}}%
\journal{Nuclear Physics B}
\begin{document}

\begin{frontmatter}

%% Title, authors and addresses

%% use the tnoteref command within \title for footnotes;
%% use the tnotetext command for theassociated footnote;
%% use the fnref command within \author or \affiliation for footnotes;
%% use the fntext command for theassociated footnote;
%% use the corref command within \author for corresponding author footnotes;
%% use the cortext command for theassociated footnote;
%% use the ead command for the email address,
%% and the form \ead[url] for the home page:
%% \title{Title\tnoteref{label1}}
%% \tnotetext[label1]{}
%% \author{Name\corref{cor1}\fnref{label2}}
%% \ead{email address}
%% \ead[url]{home page}
%% \fntext[label2]{}
%% \cortext[cor1]{}
%% \affiliation{organization={},
%%             addressline={},
%%             city={},
%%             postcode={},
%%             state={},
%%             country={}}
%% \fntext[label3]{}

\title{Randomized Kaczmarz with tail averaging}

%% use optional labels to link authors explicitly to addresses:
%% \author[label1,label2]{}
%% \affiliation[label1]{organization={},
%%             addressline={},
%%             city={},
%%             postcode={},
%%             state={},
%%             country={}}
%%
%% \affiliation[label2]{organization={},
%%             addressline={},
%%             city={},
%%             postcode={},
%%             state={},
%%             country={}}

\author[label1]{Ethan N. Epperly} %% Author name
\ead{eepperly@caltech.edu}

%% Author affiliation
\affiliation[label1]{organization={Department of Computing and Mathematical Sciences, California Institute of Technology},%Department and Organization
            addressline={1200 E California Blvd}, 
            city={Pasadena},
            postcode={91125}, 
            state={CA},
            country={USA}}

\author[label2]{Gil Goldshlager} %% Author name
\ead{ggoldsh@berkeley.edu}

%% Author affiliation
\affiliation[label2]{organization={Department of Mathematics, University of California Berkeley},%Department and Organization
            addressline={110 Sproul Hall}, 
            city={Berkeley},
            postcode={94720}, 
            state={CA},
            country={USA}}

\author[label3]{Robert J. Webber} %% Author name
\ead{rwebber@ucsd.edu}

%% Author affiliation
\affiliation[label3]{organization={Department of Mathematics, University of California San Diego},%Department and Organization
            addressline={9500 Gilman Drive}, 
            city={La Jolla},
            postcode={92093}, 
            state={CA},
            country={USA}}

%% Abstract
\begin{abstract}
%% Text of abstract
The randomized Kaczmarz (RK) method is a well-known approach for solving linear least-squares problems with a large number of rows.
RK accesses and processes just one row at a time, leading to exponentially fast convergence for consistent linear systems.
However, RK fails to converge to the least-squares solution for inconsistent systems.
This work presents a simple fix: average the RK iterates produced in the tail part of the algorithm.
The proposed tail-averaged randomized Kaczmarz (TARK) converges for both consistent and inconsistent least-squares problems at a polynomial rate, which is known to be optimal for any row-access method.
An extension of TARK also leads to efficient solutions for ridge-regularized least-squares problems.
\end{abstract}

% %%Graphical abstract
% \begin{graphicalabstract}
% %\includegraphics{grabs}
% \end{graphicalabstract}

%%Research highlights
% \begin{highlights}
% \item Research highlight 1
% \item Research highlight 2
% \end{highlights}

%% Keywords
% \begin{keyword}
% %% keywords here, in the form: keyword \sep keyword

% %% PACS codes here, in the form: \PACS code \sep code

% %% MSC codes here, in the form: \MSC code \sep code
% %% or \MSC[2008] code \sep code (2000 is the default)

% \end{keyword}

\end{frontmatter}

%% Add \usepackage{lineno} before \begin{document} and uncomment 
%% following line to enable line numbers
%% \linenumbers

%% main text
%%

\section{Introduction}

The overdetermined linear least-squares problem
\begin{equation} \label{eq:ls}
   \min_{\vec{x} \in \real^d}\, \norm{\vec{b}  - \mat{A}\vec{x}}^2 \quad \text{for } \mat{A} \in \real^{n\times d} \text{ and } \vec{b} \in \real^n \text{ with } n > d
\end{equation}
is fundamental in statistics, scientific computation, and machine learning.
Its solution is conveniently expressed using the Moore--Penrose pseudoinverse, $\vec{x}_\star = \mat{A}^+ \mat{b}$.
However, computing this solution by direct means is slow and memory-intensive when the number of rows is large.
For the largest problems
(say, $n \geq 10^{12}$), storing even a single column of $\mat{A}$ in random-access memory is challenging.

\emph{Row-access} methods have been proposed as a practical way to solve large least-squares problems.
These methods access and process one or a few rows of $\mat{A}$ at a time.
An example of a row-access method is randomized Kaczmarz (RK) \cite{strohmer2008randomized}, which is reviewed in Subsection~\ref{sec:kaczmarz}.
RK converges exponentially fast if the least-squares problem is consistent, $\vec{b} = \mat{A}\vec{x}_\star $ \cite{strohmer2008randomized,ma2015convergence}.
However, in the inconsistent case where $ \vec{b} \ne \mat{A}\vec{x}_\star$, RK only converges up to a finite horizon. 
This paper overcomes the finite horizon by combining RK with tail averaging, resulting in a new tail-averaged randomized Kaczmarz (TARK) method.

\subsection{Randomized Kaczmarz} \label{sec:kaczmarz}
Randomized Kaczmarz \cite{strohmer2008randomized} is a well-known row-access method.
Beginning with an initial estimate (typically $\vec{x}_0=\vec{0}$), RK applies the following update procedure for $t=0,1,\ldots$:
\begin{itemize}
    \item \emph{\textbf{Sample}} a row index $i_t$ according to the probability distribution 
    \begin{subequations} \label{eq:rk}
    \begin{equation}
        \prob \{ i_t = i \} = \frac{\norm{\vec{a}_i}^2}{\norm{\mat{A}}_{\rm F}^2} \quad \text{for } i =1, \ldots, n. \label{eq:rk-sample}
    \end{equation}
    \item \emph{\textbf{Update}} the solution $\vec{x}_t$ so that the selected equation $\vec{a}_{i_t}^\top \vec{x} = b_{i_t}$ holds exactly:
    \begin{equation}
        \vec{x}_{t+1} \coloneqq \vec{x}_t + \frac{b_{i_t} - \vec{a}_{i_t}^\top \vec{x}_t}{\norm{\vec{a}_{i_t}}^2} \vec{a}_{i_t}. \label{eq:rk-update}
    \end{equation}
    \end{subequations}
\end{itemize}
Throughout this paper, $\vec{a}_i^\top$ denotes the $i$th row of $\mat{A}$, $b_i$ denotes the $i$th entry of $\vec{b}$, $\norm{\cdot}$ is the vector $\ell_2$ norm or matrix spectral norm, and $\norm{\cdot}_{\rm F}$ is the Frobenius norm.

RK can be interpreted as an optimized version of stochastic gradient descent for linear least-squares problems
that uses nonuniform selection probabilities to improve the convergence rate and eliminate the need for step size tuning \cite{needell2014stochastic}.
These probabilities can be precomputed using a single pass through the matrix $\mat{A}$, which might be expensive.
Sometimes this initial computation can be avoided by using rejection sampling \cite[Sec.~3]{needell2014stochastic}. Alternatively, RK can be implemented with uniform sampling, which is equivalent to applying RK to the diagonally reweighted least-squares problem $\min_{\vec{x}}\, \norm{\mat{D}\vec{b} - (\mat{D}\mat{A})\vec{x}}^2$ for $\mat{D} = \diag(1/\norm{\vec{a}_i})$.

The convergence rate for RK depends on the Demmel condition number 
\begin{equation*} \label{eq:demmel}
    \kappa_{\rm dem} \coloneqq \norm{\smash{\mat{A}^+}} \,\norm{\mat{A}}_{\rm F}.
\end{equation*}
The best available error bound is as follows:
\begin{theorem}[Randomized Kaczmarz: Convergence to a horizon \cite{zouzias2013randomized}] \label{thm:horizon}
    Assume $\vec{x}_0 \in \operatorname{range}(\mat{A}^\top)$.
    Then the RK iteration \eqref{eq:rk} converges exponentially fast until reaching a finite horizon related to the inconsistency:
    \begin{equation*}
        \expect\, \norm{\vec{x}_t - \vec{x}_\star}^2 \le \underbrace{\bigl(1 - \kappa_{\rm dem}^{-2} \bigr)^t\, \norm{\vec{x}_0 - \vec{x}_\star}^2}_{\text{exponential convergence}} + \underbrace{\norm{\smash{\mat{A}^+}}^2\, \lVert \vec{b} - \mat{A} \vec{x_\star} \rVert^2}_{\text{finite horizon}}.
    \end{equation*}
\end{theorem}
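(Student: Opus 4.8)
The plan is to track the error vector $\vec{e}_t \coloneqq \vec{x}_t - \vec{x}_\star$ and show that it obeys a contractive affine recursion in expectation. First I would record two structural facts. Writing $\vec{r} \coloneqq \vec{b} - \mat{A}\vec{x}_\star$ for the optimal residual, the normal equations $\mat{A}^\top\mat{A}\,\vec{x}_\star = \mat{A}^\top\vec{b}$ give $\mat{A}^\top \vec{r} = \vec{0}$; its individual entries $(\vec{r})_i$ are what will generate the horizon term. Second, since $\vec{x}_\star = \mat{A}^+\vec{b} \in \range(\mat{A}^\top)$ and each RK update adds only a multiple of $\vec{a}_{i_t} \in \range(\mat{A}^\top)$, the hypothesis $\vec{x}_0 \in \range(\mat{A}^\top)$ propagates by induction to give $\vec{e}_t \in \range(\mat{A}^\top)$ for every $t$.

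Next I would substitute $b_{i_t} = \vec{a}_{i_t}^\top\vec{x}_\star + (\vec{r})_{i_t}$ into the update \eqref{eq:rk-update} to obtain the error recursion
\[
    \vec{e}_{t+1} = (\Id - \mat{P}_{i_t})\,\vec{e}_t + \frac{(\vec{r})_{i_t}}{\norm{\vec{a}_{i_t}}^2}\,\vec{a}_{i_t}, \qquad \mat{P}_i \coloneqq \frac{\vec{a}_i \vec{a}_i^\top}{\norm{\vec{a}_i}^2}.
\]
The crucial simplification is that the cross term in $\norm{\vec{e}_{t+1}}^2$ vanishes, because $\vec{a}_{i_t}^\top(\Id - \mat{P}_{i_t}) = \vec{0}$, reflecting the Pythagorean orthogonality between the projected error and the correction direction. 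Taking the conditional expectation over $i_t$ under the sampling law \eqref{eq:rk-sample}, together with the identities $\expect[\mat{P}_{i_t}] = \mat{A}^\top\mat{A}/\norm{\mat{A}}_{\rm F}^2$ and $\expect[(\vec{r})_{i_t}^2/\norm{\vec{a}_{i_t}}^2] = \norm{\vec{r}}^2/\norm{\mat{A}}_{\rm F}^2$, then yields
\[
    \expect\bigl[\norm{\vec{e}_{t+1}}^2 \mid \vec{e}_t\bigr] = \vec{e}_t^\top\left(\Id - \frac{\mat{A}^\top\mat{A}}{\norm{\mat{A}}_{\rm F}^2}\right)\vec{e}_t + \frac{\norm{\vec{r}}^2}{\norm{\mat{A}}_{\rm F}^2}.
\]

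To finish, I would invoke the invariant-subspace fact: since $\vec{e}_t \in \range(\mat{A}^\top)$, the Rayleigh quotient satisfies $\vec{e}_t^\top\mat{A}^\top\mat{A}\,\vec{e}_t \ge \sigma_{\min}^2\,\norm{\vec{e}_t}^2$, where $\sigma_{\min} = 1/\norm{\mat{A}^+}$ is the smallest nonzero singular value. This contracts the quadratic form by the factor $1 - \sigma_{\min}^2/\norm{\mat{A}}_{\rm F}^2 = 1 - \kappa_{\rm dem}^{-2}$, so after taking total expectation $\expect\norm{\vec{e}_{t+1}}^2 \le (1-\kappa_{\rm dem}^{-2})\,\expect\norm{\vec{e}_t}^2 + \norm{\vec{r}}^2/\norm{\mat{A}}_{\rm F}^2$. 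Unrolling this scalar recurrence and bounding the geometric series $\sum_{s<t}(1-\kappa_{\rm dem}^{-2})^s \le \kappa_{\rm dem}^2 = \norm{\mat{A}^+}^2\norm{\mat{A}}_{\rm F}^2$ turns the constant term into the horizon $\norm{\mat{A}^+}^2\norm{\vec{r}}^2$, which is precisely the claimed bound. The main obstacle is this last invariant-subspace step: the projections $\Id - \mat{P}_{i_t}$ are nonexpansive but do not contract along $\ker(\mat{A})$, so without the hypothesis $\vec{x}_0 \in \range(\mat{A}^\top)$ the error could retain a fixed null-space component and the Rayleigh-quotient lower bound would collapse to zero. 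Everything else is a routine one-step expectation followed by unrolling a linear recurrence; the care lies in verifying that the iterates never leave $\range(\mat{A}^\top)$, so that $\sigma_{\min}$ rather than $0$ governs the rate.
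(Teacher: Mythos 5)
Your proposal is correct and follows essentially the same route as the paper's own argument, which appears as Lemma~\ref{lem:magnitudes} (the statement of Theorem~\ref{thm:horizon} with $r=t$): the orthogonal-projection decomposition of the one-step error with the vanishing cross term, the conditional expectation $\expect[\mat{P}_{i_t}] = \mat{A}^\top\mat{A}/\norm{\mat{A}}_{\rm F}^2$, the Rayleigh-quotient contraction on $\range(\mat{A}^\top)$ (the paper's Lemma~\ref{lem:demmel}), and the geometric-series bound $\sum_s (1-\kappa_{\rm dem}^{-2})^s = \kappa_{\rm dem}^2$ producing the horizon. Your explicit induction showing $\vec{e}_t \in \range(\mat{A}^\top)$ and your remark on why the hypothesis on $\vec{x}_0$ is indispensable match the paper's reasoning exactly.
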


Unfortunately, the finite convergence horizon cannot be eliminated without changing the RK algorithm.
To overcome this obstacle, several variants of RK have been proposed:
\begin{itemize}
\item Randomized extended Kaczmarz \cite{zouzias2013randomized} manipulates the columns of $\mat{A}$ to achieve exponential convergence to $\vec{x}_\star$, even in the inconsistent case.
Yet the column manipulations are prohibitively expensive for the largest problems.
\item RK with underrelaxation (RKU) \cite{censor1983strong,cai2012exponential} introduces a relaxation parameter that can be gradually reduced to ensure convergence to the least-squares solution $\vec{x}_\star$.
The available theory suggests the method no longer converges exponentially fast for consistent problems \cite{moulines2011non,lin2015learning}.
\item Randomized Kaczmarz with averaging (RKA) \cite{moorman2020randomized} averages multiple independent RK updates (``threads'') at each iteration.
This method still converges only up to a finite horizon, but the horizon can be reduced by increasing the number of threads.
\end{itemize}
The limitations of these existing methods will be demonstrated through the experiments in Subsection~\ref{sec:numerics}.

\subsection{Tail-averaged randomized Kaczmarz}
\label{sec:tail}

This paper explores tail averaging as a different strategy to improve the convergence of RK.
Given a sequence of iterates $\vec{x}_0, \vec{x}_1, \ldots$,
the tail-averaged estimator is the quantity
\begin{equation}
\label{eq:averaged}
    \overline{\vec{x}}_t \coloneqq \frac{1}{t-t_{\rm b}} \sum\nolimits_{s = t_{\rm b}}^{t-1} \vec{x}_s,
\end{equation}
which depends on the burn-in time $t_{\rm b}$ and the final time $t$.
Tail averaging is frequently applied in Markov chain Monte Carlo \cite{metropolis1953equation} to obtain a convergent estimator from stochastically varying samples.
Tail averaging has also been combined with numerical optimization methods \cite[Thm.~3.2]{bubeck2015convex}, and it leads to the optimal $\mathcal{O}(1/t)$ convergence rate for stochastic gradient descent for strongly convex loss functions \cite{polyak1992acceleration,rakhlin2012making,jain2018parallelizing}.

\begin{algorithm}[t]
\caption{Tail-averaged randomized Kaczmarz (TARK)} \label{alg:TARK}
\begin{algorithmic}[1]
\Require Matrix $\mat{A} \in \real^{n \times d}$, vector $\vec{b} \in \real^n$, initial estimate $\vec{x}_0 \in \real^d$, burn-in time $t_{\rm b}$, and final time $t$
\For{$s$ in $0, \ldots, t-2$}
    \State Sample $i \sim \norm{\vec{a}_i}^2 /\norm{\mat{A}}_{\rm F}^2$ 
    \State $\vec{x}_{s+1}= \vec{x}_s + (b_i - \vec{a}_i^\top \vec{x}_s) \,\vec{a}_i / \norm{\vec{a}_i}^2$
    \EndFor 
\State $\overline{\vec{x}}_t = \bigl(\sum_{s = t_{\rm b}}^{t-1} \vec{x}_s\bigr) / (t - t_b)$ \\
\Return $\overline{\vec{x}}_t$ 
\end{algorithmic}
\end{algorithm}

Our main proposal is \emph{tail-averaged} randomized Kaczmarz (TARK), which outputs the tail average \eqref{eq:averaged} of the standard RK iterates \eqref{eq:rk};
see Algorithm~\ref{alg:TARK}.
A variant of TARK for ridge regression problems will be presented in Subsection~\ref{sec:rr}.

TARK converges to the exact least-squares solution $\vec{x}_\star$ with no finite horizon, for both consistent and inconsistent least-squares problems: 
\begin{theorem}[Mean square error bound for TARK]
\label{thm:variance_simple}
Assume $\vec{x}_0 \in \operatorname{range}(\mat{A}^\top)$.
The TARK estimator converges at a hybrid rate that balances exponential and polynomial convergence:
\begin{equation*}
    \expect\, \bigl\lVert \overline{\vec{x}}_t - \vec{x}_\star \bigr\rVert^2 \leq 
    \underbrace{\bigl(1 - \kappa_{\rm dem}^{-2} \bigr)^{t_{\rm b}} \, \norm{\vec{x}_0 - \vec{x}_\star}^2}_{\text{exponential convergence}}
    + \underbrace{\frac{2 \kappa_{\rm dem}^2 - 1}{t - t_{\rm b}} \, \norm{\smash{\mat{A}^+}}^2 \, \lVert \vec{b} - \mat{A} \vec{x_\star} \rVert^2}_{\text{polynomial convergence}}.
\end{equation*}
\end{theorem}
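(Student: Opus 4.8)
The plan is to track the error $\vec{e}_s \coloneqq \vec{x}_s - \vec{x}_\star$ and exploit that randomized Kaczmarz is an affine recursion whose noise has conditional mean zero at the least-squares solution. Writing $b_{i_s} - \vec{a}_{i_s}^\top\vec{x}_s = (b_{i_s} - \vec{a}_{i_s}^\top\vec{x}_\star) - \vec{a}_{i_s}^\top\vec{e}_s$ and abbreviating $\vec{r} \coloneqq \vec{b} - \mat{A}\vec{x}_\star$, the update \eqref{eq:rk-update} becomes $\vec{e}_{s+1} = \mat{M}\vec{e}_s + \vec{\xi}_s$, where $\mat{M} \coloneqq \Id - \mat{A}^\top\mat{A}/\norm{\mat{A}}_{\rm F}^2$ and $\vec{\xi}_s \coloneqq \vec{e}_{s+1} - \mat{M}\vec{e}_s$. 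First I would check that $\expect[\vec{\xi}_s \mid \mathcal{F}_s] = \vec{0}$, the crucial point being that the residual-driven part of the noise has expectation $\mat{A}^\top\vec{r}/\norm{\mat{A}}_{\rm F}^2 = \vec{0}$ by the normal equations. I would also record that $\range(\mat{A}^\top)$ is invariant and contains every $\vec{e}_s$, and that on this subspace $\mat{M}$ is symmetric positive semidefinite with spectral norm $\rho \coloneqq 1 - \kappa_{\rm dem}^{-2}$, since the smallest nonzero eigenvalue of $\mat{A}^\top\mat{A}/\norm{\mat{A}}_{\rm F}^2$ is $\kappa_{\rm dem}^{-2}$.

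Next I would expand $\expect\,\norm{\overline{\vec{x}}_t - \vec{x}_\star}^2 = (t-t_{\rm b})^{-2}\sum_{s,s'=t_{\rm b}}^{t-1}\expect\,\ip{\vec{e}_s}{\vec{e}_{s'}}$. For $s \le s'$ the tower property together with $\expect[\vec{e}_{s'} \mid \mathcal{F}_s] = \mat{M}^{s'-s}\vec{e}_s$ collapses each correlation to $\expect[\vec{e}_s^\top\mat{M}^{s'-s}\vec{e}_s]$, so the double sum equals $\sum_s \expect[\vec{e}_s^\top\mat{R}_s\vec{e}_s]$ with the geometric operator $\mat{R}_s \coloneqq \Id + 2\sum_{k=1}^{t-1-s}\mat{M}^k$. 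On $\range(\mat{A}^\top)$ each eigenvalue $\mu\in[0,\rho]$ of $\mat{M}$ gives $\mat{R}_s$ an eigenvalue at most $(1+\mu)/(1-\mu)\le(1+\rho)/(1-\rho) = 2\kappa_{\rm dem}^2-1$, so $\mat{R}_s \preceq (2\kappa_{\rm dem}^2-1)\,\Id$ there; this is exactly where the polynomial constant originates.

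To disentangle the two terms I would split $\vec{e}_s = \bar{\vec{e}}_s + \vec{\delta}_s$ into its deterministic mean $\bar{\vec{e}}_s = \mat{M}^s\vec{e}_0$ and a mean-zero fluctuation. The mean part contributes $\norm{(t-t_{\rm b})^{-1}\sum_{s=t_{\rm b}}^{t-1}\mat{M}^s\vec{e}_0}^2$, which I would bound sharply—not via $\mat{R}_s$ but by diagonalizing $\mat{M}$: each eigencomponent is averaged with weight $(t-t_{\rm b})^{-1}\sum_{s=t_{\rm b}}^{t-1}\mu^s \le \mu^{t_{\rm b}}\le\rho^{t_{\rm b}}$, yielding the exponential term $\rho^{t_{\rm b}}\norm{\vec{e}_0}^2$. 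The fluctuation part, through $\mat{R}_s \preceq (2\kappa_{\rm dem}^2-1)\,\Id$, reduces to $(2\kappa_{\rm dem}^2-1)(t-t_{\rm b})^{-2}\sum_s\expect\norm{\vec{\delta}_s}^2$; to bound the accumulated second moments I would use the scalar recursion $\expect\norm{\vec{e}_{s+1}}^2 = \expect\norm{\vec{e}_s}^2 - \tr(\mat{A}^\top\mat{A}\,\expect[\vec{e}_s\vec{e}_s^\top])/\norm{\mat{A}}_{\rm F}^2 + \norm{\vec{r}}^2/\norm{\mat{A}}_{\rm F}^2$, which telescopes and exposes the stationary noise floor $\norm{\smash{\mat{A}^+}}^2\norm{\vec{r}}^2$ responsible for the $1/(t-t_{\rm b})$ rate.

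The main obstacle is this variance term, because $\vec{\xi}_s$ is not purely additive: besides the residual-driven part (covariance trace $\norm{\vec{r}}^2/\norm{\mat{A}}_{\rm F}^2$, which sets the floor) it has a multiplicative part $(\mat{Q}_{i_s}-\mat{M})\vec{e}_s$ proportional to the current error, where $\mat{Q}_{i_s} = \Id - \vec{a}_{i_s}\vec{a}_{i_s}^\top/\norm{\vec{a}_{i_s}}^2$. The per-step fluctuation is therefore coupled to the decaying transient, and a careless bound inflates the exponential contribution by factors of $\kappa_{\rm dem}^2$. The delicate step is to show that this bias-to-variance leakage—which in the second-moment recursion appears as a term $\bar{\vec{e}}_s^\top(\mat{A}^\top\mat{A}/\norm{\mat{A}}_{\rm F}^2)\mat{M}\,\bar{\vec{e}}_s$ carrying the extra factor $\mu^{2s}$—sums to a quantity dominated by $\rho^{t_{\rm b}}\norm{\vec{e}_0}^2$, so that only the additive floor survives in the polynomial term. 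Balancing this leakage against the contraction is where I expect the bulk of the technical work to lie.
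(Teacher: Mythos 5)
Your skeleton coincides with the paper's: expand $\expect\norm{\overline{\vec{x}}_t-\vec{x}_\star}^2$ as a double sum of correlations, collapse each one via the tower property and the multi-step identity $\expect[\vec{e}_{s'}\mid\vec{e}_s]=\mat{M}^{s'-s}\vec{e}_s$ with $\mat{M}=\Id-\mat{A}^\top\mat{A}/\norm{\mat{A}}_{\rm F}^2$, use that $\mat{M}$ contracts by $\rho=1-\kappa_{\rm dem}^{-2}$ on $\range(\mat{A}^\top)$, and extract the constant $2\kappa_{\rm dem}^2-1$ from the geometric sum $(1+\rho)/(1-\rho)$. Where you diverge is the split $\vec{e}_s=\mat{M}^s\vec{e}_0+\vec{\delta}_s$, and the ``leakage'' you flag at the end is a genuine obstruction to finishing by that route. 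Your own recursion gives only $\expect\norm{\vec{\delta}_s}^2\le\rho^s\norm{\vec{e}_0}^2+\norm{\smash{\mat{A}^+}}^2\norm{\vec{b}-\mat{A}\vec{x}_\star}^2$, and the transient piece is genuinely of that order (take a rank-one example where the single nonzero eigenvalue of $\mat{M}$ equals $\rho$). After multiplying by $2\kappa_{\rm dem}^2-1$ and summing over $s\in[t_{\rm b},t)$, it contributes on the order of $(2\kappa_{\rm dem}^2-1)\,\kappa_{\rm dem}^2\,\rho^{t_{\rm b}}(t-t_{\rm b})^{-2}\norm{\vec{e}_0}^2$; added to the mean part $\rho^{2t_{\rm b}}\norm{\vec{e}_0}^2$, this is \emph{not} dominated by $\rho^{t_{\rm b}}\norm{\vec{e}_0}^2$ whenever $t-t_{\rm b}\lesssim\kappa_{\rm dem}^2$, which is precisely the regime where the exponential term matters. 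Carried to completion, your argument proves a correct but different bound---essentially the paper's Theorem~\ref{thm:alternative} plus an extra $\rho^{2t_{\rm b}}\norm{\vec{e}_0}^2$---rather than Theorem~\ref{thm:variance_simple}.

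The paper never splits off the mean and so never meets this problem. It bounds each cross term directly,
\begin{equation*}
\expect\,\bigl[\vec{e}_r^\top\vec{e}_s\bigr]\;\le\;\rho^{s-r}\,\expect\norm{\vec{e}_r}^2\;\le\;\rho^{s}\,\norm{\vec{e}_0}^2+\rho^{s-r}\,\norm{\smash{\mat{A}^+}}^2\,\norm{\vec{b}-\mat{A}\vec{x}_\star}^2\qquad(r\le s),
\end{equation*}
by inserting the single-iterate bound $\expect\norm{\vec{e}_r}^2\le\rho^r\norm{\vec{e}_0}^2+\norm{\smash{\mat{A}^+}}^2\norm{\vec{b}-\mat{A}\vec{x}_\star}^2$ (Lemma~\ref{lem:magnitudes}) into the correlation decay. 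The transient factor $\rho^r$ absorbs the decay $\rho^{s-r}$ to give $\rho^s\le\rho^{t_{\rm b}}$, which is bounded \emph{uniformly} over the double sum with coefficient one; only the constant noise floor is hit by the geometric summation that produces $2\kappa_{\rm dem}^2-1$. If you drop the $\vec{\delta}_s$ decomposition and instead apply Lemma~\ref{lem:magnitudes} at the point where you have $\rho^{s-r}\expect\norm{\vec{e}_r}^2$, your argument closes and gives the stated theorem exactly.
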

The proof of Theorem~\ref{thm:variance_simple} appears in Subsection~\ref{sec:proof}.

Similar to MCMC error bounds,
Theorem~\ref{thm:variance_simple} decomposes the mean square error into the sum of a bias term that decays exponentially in the burn-in time $t_{\rm b}$
and a variance term that decays as $1/t$ in the final time $t$. In particular, TARK converges when both $t_{\rm b}$ and $t-t_{\rm b}$ go to infinity.
To control both terms in this error bound, we recommend selecting $t_{\rm b} \in [t/4, t/2]$; see \ref{app:increase} for a storage-efficient implementation that ensures this condition when the final time $t$ is not known in advance.

\begin{table}[t]
    \centering
    \begin{tabular}{lcccccl}\toprule
    Method & Initial rate & Final rate & Row-access \\ \midrule 
    RK & \textcolor{goodcolor}{Exponential} & \textcolor{badcolor}{Finite horizon} & \textcolor{goodcolor}{Yes \cmark} \\ \midrule
    Extended RK \cite{zouzias2013randomized} & \textcolor{goodcolor}{Exponential} & \textcolor{goodcolor}{Exponential} & \textcolor{badcolor}{No \xmark} \\
    RK w/ underrelaxation \cite{cai2012exponential} & \textcolor{badcolor}{Less than exponential} & \textcolor{goodcolor}{Polynomial} & \textcolor{goodcolor}{Yes \cmark} \\
    RK w/ averaging \cite{moorman2020randomized} & \textcolor{goodcolor}{Exponential} & \textcolor{badcolor}{Finite horizon} & \textcolor{goodcolor}{Yes \cmark} \\ \midrule 
    TARK & \textcolor{goodcolor}{Exponential} & \textcolor{goodcolor}{Polynomial} & \textcolor{goodcolor}{Yes \cmark}
    \\ \bottomrule 
    \end{tabular}
    \caption{RK variants for inconsistent least-squares problems. The table lists the initial rate of convergence, the final rate of convergence, and whether the method is a row-access method.}
    \label{tab:comparison}
\end{table}

A similar proof guarantees that TARK converges when $t$ goes to infinity with $t_{\rm b}$ fixed. 
We have the following alternative version of Theorem~\ref{thm:variance_simple}:
\begin{theorem}[Alternative TARK error bound]
\label{thm:alternative}
Assume $\vec{x}_0 \in \operatorname{range}(\mat{A}^\top)$.
The TARK estimator satisfies the alternative error bound:
\begin{equation*}
    \expect\, \bigl\lVert \overline{\vec{x}}_t - \vec{x}_\star \bigr\rVert^2 \leq 
    \frac{2 \kappa_{\rm dem}^2 - 1}{t - t_{\rm b}}
    \biggl[
    \frac{\kappa_{\rm dem}^2 \left(1 - \kappa_{\rm dem}^{-2} \right)^{t_{\rm b}}}{(t - t_{\rm b})} \, \norm{\vec{x}_0 - \vec{x}_\star}^2
    +  \norm{\smash{\mat{A}^+}}^2\, \lVert \vec{b} - \mat{A} \vec{x_\star} \rVert^2 \biggr].
\end{equation*}
\end{theorem}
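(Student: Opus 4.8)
The plan is to mirror the second-moment analysis behind Theorem~\ref{thm:variance_simple}, but to route the \emph{entire} bound through the per-iterate estimate of Theorem~\ref{thm:horizon} rather than peeling off a deterministic bias trajectory; this is exactly what trades the clean $\rho^{t_{\rm b}}$ bias factor of Theorem~\ref{thm:variance_simple} for the extra $1/(t-t_{\rm b})$ decay appearing here. Write $\vec{e}_s \coloneqq \vec{x}_s - \vec{x}_\star$ and $\vec{r} \coloneqq \vec{b} - \mat{A}\vec{x}_\star$, so that $\overline{\vec{x}}_t - \vec{x}_\star = \frac{1}{t-t_{\rm b}}\sum_{s=t_{\rm b}}^{t-1}\vec{e}_s$ and
\[
\expect\,\norm{\overline{\vec{x}}_t - \vec{x}_\star}^2 = \frac{1}{(t-t_{\rm b})^2}\sum_{s=t_{\rm b}}^{t-1}\sum_{s'=t_{\rm b}}^{t-1}\expect\,\ip{\vec{e}_s}{\vec{e}_{s'}}.
\]

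First I would record the error recursion. Subtracting $\vec{x}_\star$ from the update \eqref{eq:rk-update} and using $b_i - \vec{a}_i^\top\vec{x}_\star = r_i$ (the $i$th entry of $\vec{r}$) gives $\vec{e}_{s+1} = (\Id - \mat{P}_{i_s})\vec{e}_s + (r_{i_s}/\norm{\vec{a}_{i_s}}^2)\,\vec{a}_{i_s}$, where $\mat{P}_i = \vec{a}_i\vec{a}_i^\top/\norm{\vec{a}_i}^2$. Taking the conditional expectation over the sampling rule \eqref{eq:rk-sample} and invoking the normal equations $\mat{A}^\top\vec{r} = \vec{0}$ to annihilate the noise term, the one-step conditional mean is $\expect[\vec{e}_{s+1}\mid\mathcal{F}_s] = \mat{M}\vec{e}_s$ with $\mat{M} \coloneqq \Id - \mat{A}^\top\mat{A}/\norm{\mat{A}}_{\rm F}^2$. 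Since $\vec{x}_0\in\range(\mat{A}^\top)$ and every update direction $\vec{a}_{i_s}$ lies in $\range(\mat{A}^\top)$, the errors $\vec{e}_s$ remain in $\range(\mat{A}^\top)$, a subspace on which $\mat{M}$ is symmetric and positive semidefinite with operator norm $1 - \kappa_{\rm dem}^{-2} \eqqcolon \rho$ (the extreme eigenvalue is attained at $\sigma_{\min} = 1/\norm{\mat{A}^+}$).

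The crux is controlling the off-diagonal correlations. For $s' \ge s$, iterating the conditional mean gives $\expect[\vec{e}_{s'}\mid\mathcal{F}_s] = \mat{M}^{s'-s}\vec{e}_s$, so the tower property collapses each cross term onto a single second moment: $\expect\,\ip{\vec{e}_s}{\vec{e}_{s'}} = \expect[\vec{e}_s^\top\mat{M}^{s'-s}\vec{e}_s] \le \rho^{s'-s}\,\expect\norm{\vec{e}_s}^2$, the last step using the spectral bound on $\range(\mat{A}^\top)$. Inserting this into the double sum, summing the geometric tail $\sum_{k\ge1}\rho^k = \rho/(1-\rho)$ over the two symmetric off-diagonal blocks, and collecting terms produces the scalar prefactor $1 + 2\rho/(1-\rho) = (1+\rho)/(1-\rho) = 2\kappa_{\rm dem}^2 - 1$, whence
\[
\expect\,\norm{\overline{\vec{x}}_t - \vec{x}_\star}^2 \le \frac{2\kappa_{\rm dem}^2 - 1}{(t-t_{\rm b})^2}\sum_{s=t_{\rm b}}^{t-1}\expect\norm{\vec{e}_s}^2.
\]

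Finally I would substitute Theorem~\ref{thm:horizon}, namely $\expect\norm{\vec{e}_s}^2 \le \rho^s\norm{\vec{x}_0-\vec{x}_\star}^2 + \norm{\mat{A}^+}^2\norm{\vec{r}}^2$, and sum over $s$. The horizon term contributes $(t-t_{\rm b})\,\norm{\mat{A}^+}^2\norm{\vec{r}}^2$, while $\sum_{s=t_{\rm b}}^{t-1}\rho^s \le \rho^{t_{\rm b}}/(1-\rho) = \kappa_{\rm dem}^2\rho^{t_{\rm b}}$ yields the initialization term; factoring out one power of $1/(t-t_{\rm b})$ then reproduces the claimed bound exactly. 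I expect the correlation estimate to be the only real obstacle: the key realization is that the contractive conditional-mean operator $\mat{M}^{s'-s}$ reduces every cross term $\expect\,\ip{\vec{e}_s}{\vec{e}_{s'}}$ to the single second moment $\expect\norm{\vec{e}_s}^2$, which is what makes the clean factor $(1+\rho)/(1-\rho) = 2\kappa_{\rm dem}^2 - 1$ emerge; everything after that is geometric-series bookkeeping and a direct appeal to Theorem~\ref{thm:horizon}.
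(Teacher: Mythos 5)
Your proposal is correct and follows essentially the same route as the paper: the same double-sum decomposition, the same tower-property/multi-step conditional-expectation argument reducing each cross term to $\rho^{|s'-s|}\,\expect\|\vec{e}_{\min(s,s')}\|^2$, and the same substitution of the per-iterate horizon bound followed by geometric-series bookkeeping. The only (immaterial) difference is that you factor out the $(1+\rho)/(1-\rho)=2\kappa_{\rm dem}^2-1$ prefactor before inserting the horizon bound, whereas the paper splits each cross term into its bias and variance pieces first and then sums each piece; the constants come out identical.
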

The proof of Theorem~\ref{thm:alternative} appears in Subsection~\ref{sec:proof}.

Based on our literature survey and discussions with RK experts, we believe that TARK is new.
Table~\ref{tab:comparison} presents a comparison of TARK with previous RK variants.

\section{Analysis and evaluation of tail-averaged randomized Kaczmarz} \label{sec:tark} 
This section provides a more detailed discussion of TARK.
Subsection~\ref{sec:proof} proves Theorems~\ref{thm:variance_simple} and \ref{thm:alternative}, Subsection~\ref{sec:optimality} discusses the optimal convergence rate for row-access methods, Subsection~\ref{sec:numerics} provides numerical experiments, and Subsection~\ref{sec:semi} extends TARK to semi-infinite least-squares problems.

\subsection{Proof of main theorem} \label{sec:proof}
The proof of Theorem~\ref{thm:variance_simple} follows the pattern of analysis initiated in \cite{strohmer2008randomized}, but it takes a step further by bounding the inner product terms $\expect \bigl[(\vec{x}_{t+s} - \vec{x}_\star)^\top (\vec{x}_t - \vec{x}_\star) \bigr]$ which decay exponentially fast with $s$.
For ease of reading, the analysis is presented as three lemmas followed by one main calculation.

\begin{lemma}[Multi-step expectations]
\label{lem:expectations}
    The RK iteration \eqref{eq:rk} satisfies
    \begin{equation*}
        \expect\, \bigl[\vec{x}_s - \vec{x}_\star\, \big|\, \vec{x}_r \bigr]
        = \Biggl[\mathbf{I} - \frac{\mat{A}^\top \mat{A}}{\norm{\mat{A}}_{\rm F}^2}\Biggr]^{s-r} \bigl( \vec{x}_r - \vec{x}_\star \bigr),
    \end{equation*}
    for any $r < s$,
    where the expectation averages over the random indices $i_r, \ldots, i_{s-1}$.
\end{lemma}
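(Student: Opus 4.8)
The plan is to reduce the multi-step identity to its single-step version ($s = r+1$) and then bootstrap by induction using the Markov structure of the iteration. Abbreviate the error by $\vec{e}_s \coloneqq \vec{x}_s - \vec{x}_\star$ and the averaging matrix by $\mat{M} \coloneqq \mathbf{I} - \mat{A}^\top\mat{A}/\norm{\mat{A}}_{\rm F}^2$. The single-step claim is $\expect[\vec{e}_{s+1} \mid \vec{x}_s] = \mat{M}\,\vec{e}_s$, and everything else follows from it by repeated conditioning.

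For the single step, I would rewrite the residual in the update \eqref{eq:rk-update} as $b_{i_s} - \vec{a}_{i_s}^\top \vec{x}_s = (b_{i_s} - \vec{a}_{i_s}^\top \vec{x}_\star) - \vec{a}_{i_s}^\top \vec{e}_s$, which turns the recursion into the affine form
\[
\vec{e}_{s+1} = \Bigl( \mathbf{I} - \tfrac{\vec{a}_{i_s} \vec{a}_{i_s}^\top}{\norm{\vec{a}_{i_s}}^2} \Bigr)\vec{e}_s + \tfrac{b_{i_s} - \vec{a}_{i_s}^\top \vec{x}_\star}{\norm{\vec{a}_{i_s}}^2}\,\vec{a}_{i_s}.
\]
Taking the conditional expectation over the single index $i_s$ drawn from \eqref{eq:rk-sample}, the sampling weight $\norm{\vec{a}_i}^2 / \norm{\mat{A}}_{\rm F}^2$ cancels each factor of $\norm{\vec{a}_i}^2$ in the denominators. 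The linear term then averages to $\mat{M}\,\vec{e}_s$ because $\sum_i \vec{a}_i \vec{a}_i^\top = \mat{A}^\top\mat{A}$, and the inhomogeneous term averages to $\mat{A}^\top(\vec{b} - \mat{A}\vec{x}_\star)/\norm{\mat{A}}_{\rm F}^2$.

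The crux is that this inhomogeneous term vanishes: since $\vec{x}_\star = \mat{A}^+\vec{b}$, the normal equations give $\mat{A}^\top(\vec{b} - \mat{A}\vec{x}_\star) = \vec{0}$, so the residual contributes nothing in expectation. This is the only place least-squares optimality enters and the only non-mechanical step; I expect it to be the main obstacle only in the sense of being the conceptual heart, as the surrounding moment computations are routine. With the single-step identity in hand, I would finish by induction on $s - r$: conditioning on the history through $\vec{x}_{s-1}$ (which contains $\vec{x}_r$), the one-step result gives $\expect[\vec{e}_s \mid \vec{x}_{s-1}] = \mat{M}\,\vec{e}_{s-1}$, and the tower property lets me pull the constant matrix $\mat{M}$ outside the remaining expectation, reducing $\expect[\vec{e}_s \mid \vec{x}_r]$ to $\mat{M}\,\expect[\vec{e}_{s-1}\mid\vec{x}_r]$. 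Iterating down to $r$ and using $\expect[\vec{e}_r \mid \vec{x}_r] = \vec{e}_r$ produces the factor $\mat{M}^{\,s-r}$, which is the asserted identity.
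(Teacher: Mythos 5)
Your proof is correct and follows essentially the same route as the paper's: rewrite the one-step recursion in affine form around $\vec{x}_\star$, average over the sampled index so the weights cancel the $\norm{\vec{a}_i}^2$ denominators, kill the inhomogeneous term via the normal equations, and iterate the resulting one-step identity with the tower property. No gaps.
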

\begin{proof}
    For any $t \in \{r, \ldots, s-1\}$, write the one-step update \eqref{eq:rk-update} as
    \begin{equation*}
    \vec{x}_{t+1} - \vec{x}_\star
    = \vec{x}_t + \frac{b_{i_t} - \vec{a}_{i_t}^\top \vec{x}_t}{\norm{\vec{a}_{i_t}}^2} \vec{a}_{i_t} - \vec{x}_\star
    = \biggl[\mathbf{I} - \frac{\vec{a}_{i_t}^{\vphantom{\top}} \vec{a}_{i_t}^\top}{\lVert \vec{a}_{i_t} \rVert^2}\biggr] \bigl(\vec{x}_t - \vec{x}_\star\bigr)
    + \frac{b_{i_t} - \vec{a}_{i_t}^\top \vec{x}_\star}{\norm{\vec{a}_{i_t}}^2} \vec{a}_{i_t}.
    \end{equation*}
    Use the sampling probabilities \eqref{eq:rk-sample} to calculate the expectation over the random index $i_t$:
    \begin{equation*}
        \expect\, \bigl[ \vec{x}_{t+1} - \vec{x}_\star \, \big| \, \vec{x}_t \bigr]
        = \Biggl[\mathbf{I} - \frac{\mat{A}^\top \mat{A}}{\norm{\mat{A}}_{\rm F}^2}\Biggr]\bigl( \vec{x}_t - \vec{x}_\star \bigr)
        + \frac{\vec{A}^\top \bigl(\vec{b} - \mat{A} \vec{x}_\star\bigr)}{\norm{\mat{A}}_{\rm F}^2}.
    \end{equation*}
    The least-squares solution $\vec{x}_\star $ satisfies the normal equations $\vec{A}^\top \bigl(\vec{b} - \mat{A} \vec{x}_\star\bigr) = \vec{0}$, so the last term vanishes.
    Next, take the expectation over the random indices $i_r, \ldots, i_t$:
    \begin{equation*}
        \expect\, \bigl[ \vec{x}_{t+1} - \vec{x}_\star \,\big|\, \vec{x}_r \bigr]
        = \Biggl[\mathbf{I} - \frac{\mat{A}^\top \mat{A}}{\norm{\mat{A}}_{\rm F}^2}\Biggr] \expect\, \bigl[ \vec{x}_t - \vec{x}_\star \, \big|\, \vec{x}_r \bigr],
        \quad \text{for each } t \in \{r, \ldots, s-1\},
    \end{equation*}
    Iterating this equation completes the proof.
\end{proof}

\begin{lemma}[Demmel condition number bound]
\label{lem:demmel}
    Assume $\vec{x}_0 \in \operatorname{range}(\mat{A}^\top)$.
    Then the RK iteration \eqref{eq:rk} satisfies
    \begin{equation*}
        \bigl(\vec{x}_r - \vec{x}_\star\bigr)^\top 
        \Biggl[\mathbf{I} - \frac{\mat{A}^\top \mat{A}}{\norm{\mat{A}}_{\rm F}^2}\Biggr]^{s-r}
        \bigl(\vec{x}_r - \vec{x}_\star\bigr)
        \leq (1 - \kappa_{\rm dem}^{-2})^{s-r}\, \lVert \vec{x}_r - \vec{x}_\star \rVert^2,
    \end{equation*}
    for any $r < s$, with probability one.
    The Demmel condition number is $\kappa_{\rm dem} \coloneqq \norm{\smash{\mat{A}^+}}\, \norm{\mat{A}}_{\rm F}$.
\end{lemma}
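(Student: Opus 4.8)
The plan is to exploit the structure of the symmetric matrix $\mat{M} \coloneqq \mathbf{I} - \mat{A}^\top\mat{A}/\norm{\mat{A}}_{\rm F}^2$ together with the fact that the error vector $\vec{x}_r - \vec{x}_\star$ lies in the row space $\operatorname{range}(\mat{A}^\top)$, the subspace on which $\mat{M}$ is a genuine contraction. The bound is then nothing more than a Rayleigh quotient estimate restricted to that subspace.

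First I would record the spectral properties of $\mat{M}$ via the singular value decomposition $\mat{A} = \sum_i \sigma_i \vec{u}_i \vec{v}_i^\top$. The eigenvalues of $\mat{A}^\top\mat{A}/\norm{\mat{A}}_{\rm F}^2$ are $\sigma_i^2/\norm{\mat{A}}_{\rm F}^2$, which lie in $[0,1]$ because $\norm{\mat{A}}_{\rm F}^2 = \sum_i \sigma_i^2$. Hence $\mat{M}$ is symmetric positive semidefinite with eigenvalues $1 - \sigma_i^2/\norm{\mat{A}}_{\rm F}^2 \in [0,1]$ and the same eigenvectors $\vec{v}_i$.

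Second, I would establish by induction that every iterate, and therefore every error $\vec{x}_r - \vec{x}_\star$, belongs to $\operatorname{range}(\mat{A}^\top)$. The base case is the hypothesis $\vec{x}_0 \in \operatorname{range}(\mat{A}^\top)$; the update \eqref{eq:rk-update} adds a multiple of the row $\vec{a}_{i_t} \in \operatorname{range}(\mat{A}^\top)$, preserving membership; and $\vec{x}_\star = \mat{A}^+\vec{b} \in \operatorname{range}(\mat{A}^\top)$ as well. Restricting to $\operatorname{range}(\mat{A}^\top) = \operatorname{span}\{\vec{v}_i : \sigma_i > 0\}$, the largest eigenvalue of $\mat{M}$ on this subspace corresponds to the \emph{smallest nonzero} singular value $\sigma_{\min}(\mat{A}) = 1/\norm{\smash{\mat{A}^+}}$, giving $\lambda_{\max} = 1 - \sigma_{\min}^2/\norm{\mat{A}}_{\rm F}^2 = 1 - \kappa_{\rm dem}^{-2}$. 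Since $\mat{M}$ leaves $\operatorname{range}(\mat{A}^\top)$ invariant, so does $\mat{M}^{s-r}$, and applying the Rayleigh quotient bound on this subspace yields
\[
    \bigl(\vec{x}_r - \vec{x}_\star\bigr)^\top \mat{M}^{s-r}\bigl(\vec{x}_r - \vec{x}_\star\bigr) \le \bigl(1 - \kappa_{\rm dem}^{-2}\bigr)^{s-r}\,\norm{\vec{x}_r - \vec{x}_\star}^2.
\]

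The only genuine obstacle is the membership claim of the second step. Without $\vec{x}_r - \vec{x}_\star \in \operatorname{range}(\mat{A}^\top)$, the error could have a component in the null space of $\mat{A}$, on which $\mat{M}$ acts as the identity (eigenvalue $1$); the contraction factor would then degrade to $1$ and the estimate would fail. Everything else reduces to a routine eigenvalue computation once the SVD and the identity $\sigma_{\min} = 1/\norm{\smash{\mat{A}^+}}$ are in hand.
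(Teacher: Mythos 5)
Your proof is correct and follows essentially the same route as the paper: establish that $\vec{x}_r - \vec{x}_\star \in \operatorname{range}(\mat{A}^\top)$ (iterates stay in the row space since each update adds a multiple of a row, and $\vec{x}_\star = \mat{A}^+\vec{b}$ lies there too), then expand in the right singular vectors of $\mat{A}$ and bound the restricted Rayleigh quotient by $(1-\sigma_{\min}^2/\norm{\mat{A}}_{\rm F}^2)^{s-r} = (1-\kappa_{\rm dem}^{-2})^{s-r}$. You have simply made explicit the induction and the spectral computation that the paper leaves as a one-line remark.
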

\begin{proof}
    By the construction of the RK iterates \eqref{eq:rk-update}, observe 
    that $\vec{x}_r$ is in the range of $\mat{A}^\top$, as is the solution vector $\vec{x}_\star = \mat{A}^+ \vec{b} = \mat{A}^\top (\mat{A} \mat{A}^\top)^+ \vec{b}$.
    Hence, $\vec{x}_r - \vec{x}_\star \in \operatorname{range}(\mat{A}^\top)$.
    The result follows by expanding $\vec{x}_r - \vec{x}_\star$ in the basis of $\mat{A}$'s right singular vectors.
\end{proof}

\begin{lemma}[Mean square errors, based on \cite{strohmer2008randomized}]
\label{lem:magnitudes}
    Assume $\vec{x}_0 \in \operatorname{range}(\mat{A}^\top)$.
    Then the RK iteration \eqref{eq:rk} satisfies
    \begin{equation*}
        \expect\, \bigl\lVert \vec{x}_r - \vec{x}_\star \bigr\rVert^2
        \leq \bigl(1 - \kappa_{\rm dem}^{-2}\bigr)^r\, \bigl\lVert \vec{x}_0 - \vec{x}_\star \bigr\rVert^2
        + \norm{\smash{\mat{A}^+}}^2\, \lVert \vec{b} - \mat{A} \vec{x_\star} \rVert^2
    \end{equation*}
    where the expectation averages over the random indices $i_0, i_1, \ldots, i_{r-1}$.
\end{lemma}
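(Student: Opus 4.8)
The plan is to establish a one-step affine recursion for the expected squared error and then unroll it, following the classical Strohmer--Vershynin argument. Writing $\vec{e}_t \coloneqq \vec{x}_t - \vec{x}_\star$ and using the update rewritten as in the proof of Lemma~\ref{lem:expectations}, I would express
\[
    \vec{e}_{t+1} = \bigl(\mathbf{I} - \mat{P}_{i_t}\bigr) \vec{e}_t + \frac{r_{i_t}}{\norm{\vec{a}_{i_t}}^2}\, \vec{a}_{i_t}, \qquad \mat{P}_i \coloneqq \frac{\vec{a}_i^{\vphantom{\top}} \vec{a}_i^\top}{\norm{\vec{a}_i}^2}, \quad r_i \coloneqq b_i - \vec{a}_i^\top \vec{x}_\star,
\]
where $\mat{P}_i$ is the rank-one orthogonal projector onto $\vec{a}_i$ and $r_i$ is the $i$th residual. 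The crucial observation is that $\vec{a}_{i_t}^\top (\mathbf{I} - \mat{P}_{i_t}) = \vec{0}^\top$, so expanding $\norm{\vec{e}_{t+1}}^2$ kills the cross term, and idempotence of $\mathbf{I} - \mat{P}_{i_t}$ yields the exact identity
\[
    \norm{\vec{e}_{t+1}}^2 = \norm{\vec{e}_t}^2 - \frac{(\vec{a}_{i_t}^\top \vec{e}_t)^2}{\norm{\vec{a}_{i_t}}^2} + \frac{r_{i_t}^2}{\norm{\vec{a}_{i_t}}^2}.
\]

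Next I would take the conditional expectation over $i_t$ with the probabilities \eqref{eq:rk-sample}. The factors $\norm{\vec{a}_{i_t}}^2$ cancel against the sampling weights, and recognizing $\sum_i (\vec{a}_i^\top \vec{e}_t)^2 = \norm{\mat{A} \vec{e}_t}^2$ together with $\sum_i r_i^2 = \norm{\vec{b} - \mat{A}\vec{x}_\star}^2$ gives
\[
    \expect\bigl[\norm{\vec{e}_{t+1}}^2 \,\big|\, \vec{x}_t\bigr] = \norm{\vec{e}_t}^2 - \frac{\norm{\mat{A} \vec{e}_t}^2}{\norm{\mat{A}}_{\rm F}^2} + \frac{\norm{\vec{b} - \mat{A}\vec{x}_\star}^2}{\norm{\mat{A}}_{\rm F}^2}.
\]
To turn this into a contraction I would invoke the range condition exactly as in Lemma~\ref{lem:demmel}: since every iterate lies in $\range(\mat{A}^\top)$, so does $\vec{e}_t$, and therefore $\norm{\mat{A}\vec{e}_t}^2 \geq \norm{\vec{e}_t}^2 / \norm{\smash{\mat{A}^+}}^2$, whence $\norm{\mat{A}\vec{e}_t}^2 / \norm{\mat{A}}_{\rm F}^2 \geq \kappa_{\rm dem}^{-2}\, \norm{\vec{e}_t}^2$.

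Substituting and taking full expectations produces the affine recursion $\expect \norm{\vec{e}_{t+1}}^2 \le (1 - \kappa_{\rm dem}^{-2})\, \expect \norm{\vec{e}_t}^2 + \norm{\vec{b} - \mat{A}\vec{x}_\star}^2 / \norm{\mat{A}}_{\rm F}^2$. Unrolling from $t = 0$ to $r$ and summing the geometric series, the constant term accumulates to $\kappa_{\rm dem}^2 \bigl(1 - (1 - \kappa_{\rm dem}^{-2})^r\bigr)\, \norm{\vec{b} - \mat{A}\vec{x}_\star}^2 / \norm{\mat{A}}_{\rm F}^2$, which I would bound above by $\norm{\smash{\mat{A}^+}}^2 \norm{\vec{b} - \mat{A}\vec{x}_\star}^2$ using $\kappa_{\rm dem}^2 = \norm{\smash{\mat{A}^+}}^2 \norm{\mat{A}}_{\rm F}^2$ and dropping the factor $1 - (1 - \kappa_{\rm dem}^{-2})^r \le 1$. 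This yields the claimed bound.

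I do not anticipate a serious obstacle, as this is a routine adaptation of the standard RK analysis; the only point requiring care is that the contraction factor $\kappa_{\rm dem}^{-2}$ is unavailable without the range condition, so I would be sure to record that $\vec{x}_t, \vec{x}_\star \in \range(\mat{A}^\top)$ before applying the singular-value lower bound. Without it the relevant Rayleigh quotient could be zero and no geometric decay would follow.
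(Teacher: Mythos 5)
Your proposal is correct and follows essentially the same route as the paper's proof: the same orthogonal-projection decomposition of the one-step update, the same conditional expectation over the sampled index, the same use of the range condition to obtain the contraction factor $\kappa_{\rm dem}^{-2}$ (the content of Lemma~\ref{lem:demmel}), and the same geometric-series bound $\sum_{s=0}^{\infty}(1-\kappa_{\rm dem}^{-2})^s = \kappa_{\rm dem}^2$ to absorb the residual term. The only cosmetic difference is that you expand the quadratic form $(\vec{x}_t-\vec{x}_\star)^\top[\mathbf{I}-\mat{P}_{i_t}](\vec{x}_t-\vec{x}_\star)$ explicitly rather than leaving it in matrix form.
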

\begin{proof}
    For any $t \in \{0, 1, \ldots, r-1\}$, 
    write the one-step update \eqref{eq:rk-update} as
    \begin{equation}
    \label{eq:orthogonal}
    \vec{x}_{t+1} - \vec{x}_\star
    = \underbrace{\biggl[\mathbf{I} - \frac{\vec{a}_{i_t}^{\vphantom{\top}} \vec{a}_{i_t}^\top}{\lVert \vec{a}_{i_t} \rVert^2}\biggr]}_{\text{orthogonal projection}} \bigl(\vec{x}_t - \vec{x}_\star\bigr)
    + \frac{b_{i_t} - \vec{a}_{i_t}^\top \vec{x}_\star}{\norm{\vec{a}_{i_t}}^2} \vec{a}_{i_t}.
    \end{equation}
    The decomposition explicitly identifies an orthogonal projection matrix.
    The matrix is idempotent, it annihilates the vector $\vec{a}_{i_t}$, and it preserves all vectors orthogonal to $\vec{a}_{i_t}$.
    Hence, using the orthogonal decomposition \eqref{eq:orthogonal} it follows
    \begin{equation*}
        \lVert \vec{x}_{t+1} - \vec{x}_\star \rVert^2
        = \bigl(\vec{x}_t - \vec{x}_\star\bigr)^\top \biggl[\mathbf{I} - \frac{\vec{a}_{i_t}^{\vphantom{\top}} \vec{a}_{i_t}^\top}{\lVert \vec{a}_{i_t} \rVert^2}\biggr]
        \bigl(\vec{x}_t - \vec{x}_\star\bigr)
        + \frac{|b_{i_t} - \vec{a}_{i_t}^\top \vec{x}_\star|^2}{\norm{\vec{a}_{i_t}}^2}.
    \end{equation*}
    Use the sampling probabilities \eqref{eq:rk-sample} to calculate the expectation over the random index $i_t$:
    \begin{align*}
        \expect\, \bigl[ \lVert \vec{x}_{t+1} - \vec{x}_\star \rVert^2 \, \big|\, \vec{x}_t \bigr]
        &= \bigl(\vec{x}_t - \vec{x}_\star\bigr)^\top \Biggl[\mathbf{I} - \frac{\mat{A}^\top \mat{A}}{\norm{\mat{A}}_{\rm F}^2}\Biggr]
        \bigl(\vec{x}_t - \vec{x}_\star\bigr)
        + \frac{\lVert \vec{b} - \mat{A} \vec{x}_\star\rVert^2}{\norm{\mat{A}}_{\rm F}^2} \\
        &\leq (1 - \kappa_{\rm dem}^{-2})\, \lVert \vec{x}_t - \vec{x}_\star \rVert^2
        + \frac{\lVert \vec{b} - \mat{A} \vec{x}_\star\rVert^2}{\norm{\mat{A}}_{\rm F}^2},
    \end{align*}
    where the inequality follows from Lemma~\ref{lem:demmel}.
    Next, take the expectation over the random indices $i_0, \ldots, i_t$:
    \begin{equation*}
        \expect\, \lVert \vec{x}_{t+1} - \vec{x}_\star \rVert^2
        \leq (1 - \kappa_{\rm dem}^{-2}) \, \expect\, \lVert \vec{x}_t - \vec{x}_\star \rVert^2
        + \frac{\lVert \vec{b} - \mat{A} \vec{x}_\star\rVert^2}{\norm{\mat{A}}_{\rm F}^2},
        \quad \text{for each } t \in \{0, \ldots, r-1\}.
    \end{equation*}
    Since $\sum_{s=0}^{\infty} (1 - \kappa_{\rm dem}^{-2})^s = \kappa_{\rm dem}^2 = \norm{\smash{\mat{A}^+}}^2 \norm{\mat{A}}_{\rm F}^2$, this equation implies the desired result.
\end{proof}
\begin{proof}[Proof of Theorems~\ref{thm:variance_simple} and \ref{thm:alternative}]
First decompose the mean square error as follows:
\begin{equation*}
    \expect\, \bigl\lVert \overline{\vec{x}}_t - \vec{x}_\star \bigr\rVert^2
    = \frac{1}{(t - t_{\rm b})^2}\sum\nolimits_{r,s=t_{\rm b}}^{t-1} \expect\, \bigl[(\vec{x}_r - \vec{x}_\star)^\top (\vec{x}_s - \vec{x}_\star) \bigr].
\end{equation*}
Next analyze the terms $\expect\, \bigl[(\vec{x}_r - \vec{x}_\star)^\top (\vec{x}_s - \vec{x}_\star) \bigr]$ for $r \leq s$ using Lemmas~\ref{lem:expectations}, \ref{lem:demmel}, and \ref{lem:magnitudes}:
\begin{align*}
    & \expect\, \bigl[(\vec{x}_r - \vec{x}_\star)^\top (\vec{x}_s - \vec{x}_\star) \bigr] \\
    &= \expect\, \bigl[(\vec{x}_r - \vec{x}_\star)^\top \expect\,\bigl[\vec{x}_s - \vec{x}_\star\, \big| \,\vec{x}_r \bigr] \bigr] \\
    &= \expect \Biggl[\bigl(\vec{x}_r - \vec{x}_\star \bigr)^\top \Biggl[\mathbf{I} - \frac{\mat{A}^\top \mat{A}}{\norm{\mat{A}}_{\rm F}^2}\Biggr]^{s-r} (\vec{x}_r - \vec{x}_\star) \Biggr] \\
    &\leq (1 - \kappa_{\rm dem}^{-2})^{s-r}\, \expect\, \lVert \vec{x}_r- \vec{x}_\star \rVert^2 \\
    &\leq \underbrace{\bigl(1 - \kappa_{\rm dem}^{-2}\bigr)^s\, \bigl\lVert \vec{x}_0 - \vec{x}_\star \bigr\rVert^2}_{\text{term A}}
    + \underbrace{\bigl(1 - \kappa_{\rm dem}^{-2}\bigr)^{s-r}\, \norm{\smash{\mat{A}^+}}^2\, \lVert \vec{b} - \mat{A} \vec{x_\star} \rVert^2}_{\text{term B}}.
\end{align*}
By bounding term A uniformly as $\bigl(1 - \kappa_{\rm dem}^{-2}\bigr)^s\, \bigl\lVert \vec{x}_0 - \vec{x}_\star \bigr\rVert^2 \leq \bigl(1 - \kappa_{\rm dem}^{-2}\bigr)^{t_{\rm b}}\, \bigl\lVert \vec{x}_0 - \vec{x}_\star \bigr\rVert^2$ and explicitly averaging over term B, it follows
\begin{align*}
    \expect\, \bigl\lVert \overline{\vec{x}}_t - \vec{x}_\star \bigr\rVert^2
    &= \frac{1}{(t - t_{\rm b})^2}\sum\nolimits_{r,s=t_{\rm b}}^{t-1} \expect\, \bigl[(\vec{x}_r - \vec{x}_\star)^\top (\vec{x}_s - \vec{x}_\star) \bigr] \\
    &\leq \bigl(1 - \kappa_{\rm dem}^{-2}\bigr)^{t_{\rm b}} \bigl\lVert \vec{x}_0 - \vec{x}_\star \bigr\rVert^2
    + \frac{\norm{\smash{\mat{A}^+}}^2\, \lVert \vec{b} - \mat{A} \vec{x_\star} \rVert^2}{(t - t_{\rm b})^2}\sum\nolimits_{r,s=t_{\rm b}}^{t-1} \bigl(1 - \kappa_{\rm dem}^{-2}\bigr)^{\,|s-r|}
\end{align*}
Last, apply the coarse bound
\begin{equation*}
    \sum\nolimits_{r,s=t_{\rm b}}^{t-1} \bigl(1 - \kappa_{\rm dem}^{-2}\bigr)^{|s-r|}
    \leq (t - t_{\rm b}) \biggl[-1 + 2 \sum\nolimits_{s=0}^{\infty} \bigl(1 - \kappa_{\rm dem}^{-2}\bigr)^s\biggr]
    = (t - t_{\rm b})\,(2 \kappa_{\rm dem}^2 - 1),
\end{equation*}
which completes the proof of Theorem~\ref{thm:variance_simple}.
Theorem~\ref{thm:alternative} is proved in a similar way, by explicitly averaging over term A also.
\end{proof}

\subsection{Optimal row-access methods} \label{sec:optimality}
When random noise is injected into $\vec{b}$, it becomes
difficult for any row--access method to converge in its estimates of $\vec{x_{\star}}$.
Building on this phenomenon,
\ref{app:lower} defines a set of challenging least-squares problems where any row-access algorithm leads to a mean square error of $d/t \cdot \norm{\smash{\mat{A}^+}}^2\, \lVert \vec{b} - \mat{A} \vec{x_\star} \rVert^2$ or higher.
In contrast, Theorem~\ref{thm:alternative} guarantees that TARK's mean square error vanishes at a rate of $(2 \kappa_{\rm dem}^2 - 1) / t \cdot \norm{\smash{\mat{A}^+}}^2 \lVert \vec{b} - \mat{A} \vec{x_\star} \rVert^2$ as $t \rightarrow \infty$.
Comparing these two bounds, TARK achieves the optimal $\mathcal{O}(1/t)$ scaling, but the prefactor in TARK's convergence rate is not optimal, since it depends on the square Demmel condition number $\kappa_{\rm dem}^2$.

Looking forward, there are a couple paths toward improving the mean square error of TARK and other row-access methods.
First, preconditioning strategies can be used to reduce the prefactor $\kappa_{\rm dem}^2$ toward the theoretical minimum value of $d$; see \ref{app:preprocessing} for an analysis of optimal preconditioning.
Second, row-access methods can be accelerated through block-wise strategies that process multiple rows simultaneously.
Several block-wise strategies have been proposed \cite{needell2014paved,derezinski2024solving,moorman2020randomized,jain2018parallelizing}, but it is unclear which strategy of this type is most efficient.

\subsection{Numerical demonstration}
\label{sec:numerics}

\begin{figure}
    \centering
    \includegraphics[width=0.98\textwidth]{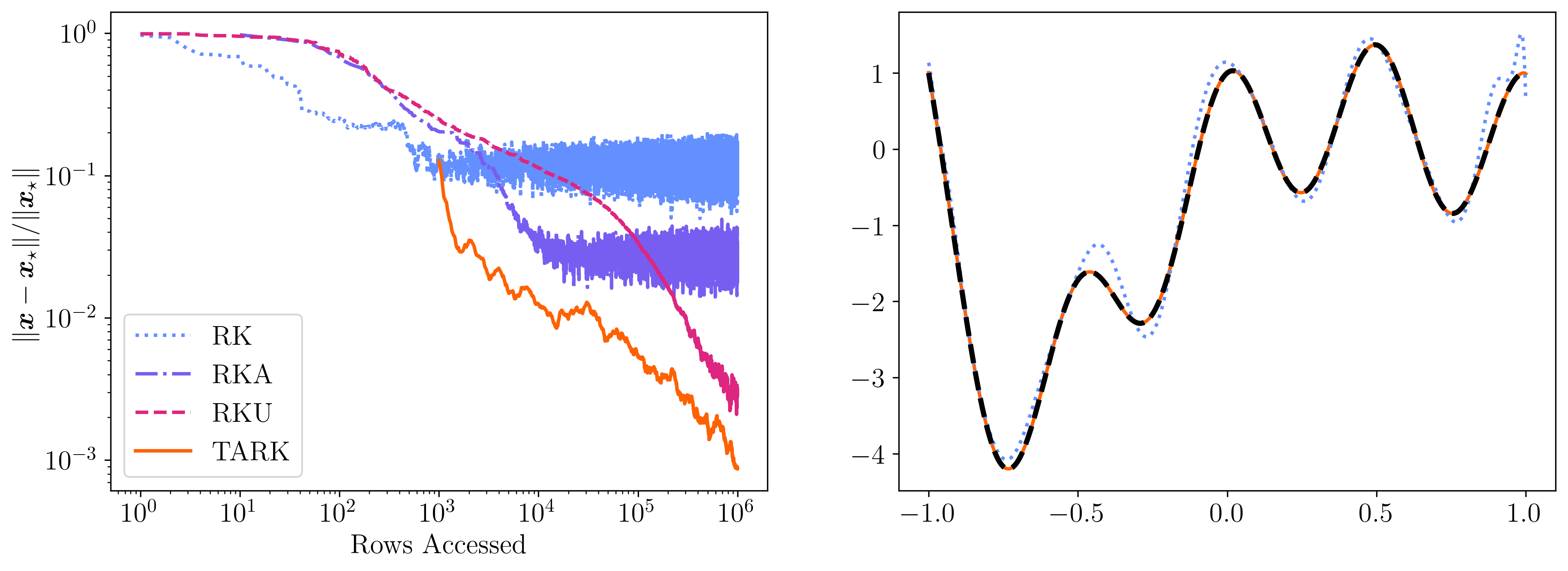}
    \caption{\emph{Left}: Relative errors for four RK methods from Table~\ref{tab:comparison} on a polynomial regression task. 
    \emph{Right}: Computed polynomials for RK and TARK compared to target function.}
    \label{fig:TARK}
\end{figure}

Figure~\ref{fig:TARK} evaluates the performance of four of the RK methods from Table~\ref{tab:comparison} on a polynomial regression task.
The goal is to fit a degree-$(d-1)$ polynomial to $n=10^6$ independent data points $(u_i, b_i)$ where the $u_i$ are equally spaced in $[-1,1]$ and $b_i$ are noisy measurements of a smooth function
\begin{equation*} 
    b_i = f(u_i) + \varepsilon_i,
    \quad \text{where }
    \begin{cases}
        f(u) = \sin(\pi u)\exp(-2u) + \cos(4\pi u), \\
        \varepsilon_i \sim \mathcal{N}(0, 0.04).
    \end{cases}
\end{equation*}
For stability, the polynomial is represented as a linear combination $p = \sum_{j=0}^{d-1} x_j T_j$ of the first $d=25$ Chebyshev polynomials $T_j$. 
The polynomial fitting leads to a $10^6\times 25$ linear least-squares problem with a well-conditioned matrix that satisfies $\norm{\mat{A}}\,\norm{\smash{\mat{A}^+}} < 6$.
This problem is highly overdetermined, but it is small enough to compute an exact reference solution.
See \url{https://github.com/eepperly/Randomized-Kaczmarz-with-Tail-Averaging} for code for all experiments in this paper.

The left panel of Figure~\ref{fig:TARK} compares four of the row-access methods from Table~\ref{tab:comparison}, while the fifth method of extended RK is omitted because it requires column access.
For all four methods, the total number of rows accessed is $t=10^6$, which is equivalent to a single pass over the input data.
The TARK burn-in time is set to $t_{\rm b} = 10^3$, the RKU underrelaxation parameter is $1/\sqrt{t}$, and the number of threads for RKA is $10$.
The results verify that TARK converges past the finite horizon of RK and RKA.
RKU similarly breaks through the finite horizon, but its convergence rate is slower than TARK's rate.
The right panel of Figure~\ref{fig:TARK} demonstrates that the polynomial computed by TARK 
accurately reproduces the target function $f$, whereas the polynomial found by RK exhibits noticeable discrepancies.

\subsection{Extension: semi-infinite problems}\label{sec:semi}
TARK can also be applied to semi-infinite (infinitely tall, finitely wide) least-squares problems \cite{shustin2022semi}
\begin{equation*}
    \min_{\vec{x} \in \real^d} \int_\Omega \big(b(u) - \vec{a}(u)^\top \vec{x}\big)^2 \, \mathrm{d} \nu(u),
\end{equation*}
where $(\Omega,\nu)$ is an arbitrary measure space and $\vec{a} : \Omega \to \real^d$ and $b : \Omega \to \real$ are $\set{L}_2$ functions.
The procedure is completely the same:
\begin{enumerate}
    \item Sample $u_t \sim  \norm{\vec{a}(u)}^2/\norm{\vec{a}}_{\rm F}^2 \, \mathrm{d} \nu(u)$ where $\norm{\vec{a}}_{\rm F}^2 = \int_\Omega \norm{\vec{a}(u)}^2\, \mathrm{d}\nu(u)$.
    \item Update $\vec{x}_{t+1} \coloneqq \vec{x}_t + (b(u_t) - \vec{a}(u_t)^\top \vec{x}_t) \,\vec{a}(u_t) / \norm{\vec{a}(u_t)}^2$.
\end{enumerate}
The natural analog of Theorem~\ref{thm:variance_simple} holds with the same proof.
Row-access methods are especially natural in the semi-infinite setting, as infinite columns cannot be directly stored in finite memory.

\section{Ridge regression}\label{sec:rr}
\newcommand{\rkrr}{RK-RR}
The least-squares problem \eqref{eq:ls} can be regularized by adding a ridge penalty $\lambda\, \norm{\vec{x}}^2$:
\begin{equation} \label{eq:rr}
    \min_{\vec{x} \in \real^d}\, \norm{\vec{b} - \mat{A}\vec{x}}^2 + \lambda\, \norm{\vec{x}}^2 \quad \text{for } \mat{A} \in \real^{n\times d} \text{ and } \vec{b} \in \real^n \text{ with } \lambda > 0,\: n > d.
\end{equation}
Adding this term accelerates convergence when the matrix $\mat{A}$ is ill-conditioned, and it may reduce the impact of noise in the data $(\mat{A},\vec{b})$.
The unique solution to the ridge-regularized problem \eqref{eq:rr} is $\smash{\bigl(\mat{A}^\top \mat{A} + \lambda\, \Id \bigr)}^{-1} \mat{A}^\top \vec{b}$, which can be quite different from the ordinary least-squares solution.
Whether or not adding regularization is appropriate depends on the application. 

To compute the ridge-regularized solution, several variants of RK have been suggested:
\begin{itemize}
\item RK can be modified to solve a consistent linear system involving the solution vector $\vec{x} \in \real^d$ and a dual variable $\vec{y} \in \real^n$ \cite{ ivanov2013kaczmarz, hefny2017rows}.
However, this approach requires storing and manipulating the length-$n$ vector
$\vec{y}$, and it also requires multiple passes over the input data.
Both requirements are computationally taxing for the largest systems.
\item
RK can be applied to the augmented least-squares problem \cite{bautu2005tikhonov}
\begin{equation} 
    \label{eq:rr_tall_system}
    \min_{\vec{x} \in \real^d}\, \norm{\,\begin{bmatrix}
        \vec{b} \\ \vec{0}
    \end{bmatrix} - \begin{bmatrix}
        \mat{A} \\ \sqrt{\lambda}\, \Id_d
    \end{bmatrix} \vec{x}\,}^2,
    \end{equation}
and TARK is also an option for solving this system.
However, this approach with either RK or TARK leads to limited accuracy because it treats the regularization term $\lambda\, \norm{\vec{x}}^2$ stochastically; see Subsection~\ref{sec:numerics2} for further discussion.
\end{itemize}

A different, natural approach to the ridge-regularized problem \eqref{eq:rr} was suggested two decades ago for the task of image reconstruction \cite{popa2005penalized, bautu2005tikhonov}. The main idea is to combine stochastic RK iterations for the least-squares term $\norm{\vec{b} - \mat{A} \vec{x}}^2$ with deterministic gradient descent steps for the regularization term $\lambda\, \norm{\vec{x}}^2$. This same idea forms the basis for \textit{weight decay}, which is commonly used to incorporate ridge regularization when training deep neural networks \cite{goodfellow2016deep}.

In the context of RK, one version of the weight decay approach can be written:
\begin{equation}
\label{eq:alternating}
    \vec{x}_{t+1/2} \coloneqq \vec{x}_t + \frac{b_{i_t} - \vec{a}_{i_t}^\top \vec{x}_t}{\norm{\vec{a}_{i_t}}^2} \vec{a}_{i_t},
    \qquad
    \vec{x}_{t+1} \coloneqq \mu\, \vec{x}_{t+1/2}.
\end{equation}
The parameter $\mu \in (0,1)$ controls the amount of regularization, resulting in the ridge parameter $\lambda = (1-\mu)/\mu \cdot \norm{\mat{A}}_{\rm F}^2$.
We call the scheme \eqref{eq:alternating} randomized Kaczmarz for ridge regression (RK-RR).
Similar to RK, RK-RR converges up to a finite horizon:
\begin{theorem}[Randomized Kaczmarz for ridge regression: convergence to a horizon]
\label{thm:rkrr}
    Assume $\vec{x}_0 \in \operatorname{range}(\mat{A}^\top)$.
    Then RK-RR \eqref{eq:alternating} converges to the ridge-regularized solution
    \begin{equation}
    \label{eq:solution}
    \vec{x}_\mu = \operatornamewithlimits{argmin}_{\vec{x} \in \real^d}\, \Bigl[\norm{\vec{b} - \mat{A}\vec{x}}^2 + \lambda \norm{\vec{x}}^2 \Bigr] \quad \text{for } \lambda = \frac{1-\mu}{\mu} \norm{\mat{A}}_{\rm F}^2
\end{equation}
at an exponential rate, up to a finite horizon related to the residual:
\begin{equation*}
    \expect\, \bigl\lVert \vec{x}_t - \vec{x}_\mu \bigr\rVert^2
    \leq 2\, [\mu^2 (1 - \kappa_{\rm dem}^{-2})]^t \, \bigl\lVert \vec{x}_0 - \vec{x}_\mu \bigr\rVert^2
    + \frac{2\mu}{(1 + \mu) \lambda} \, \lVert \vec{b} - \mat{A} \vec{x}_\mu \rVert^2.
\end{equation*}
\end{theorem}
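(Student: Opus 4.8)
The plan is to treat RK-RR as an affine stochastic recursion whose expected fixed point is $\vec{x}_\mu$, and to reuse the projection estimates behind Lemmas~\ref{lem:demmel} and \ref{lem:magnitudes}. First I would rewrite one step of \eqref{eq:alternating} as
\[
\vec{x}_{t+1} = \mu\,\mat{P}_{i_t}\vec{x}_t + \mu\,\frac{b_{i_t}}{\norm{\vec{a}_{i_t}}^2}\vec{a}_{i_t}, \qquad \mat{P}_{i_t} \coloneqq \mathbf{I} - \frac{\vec{a}_{i_t}^{\vphantom{\top}}\vec{a}_{i_t}^\top}{\norm{\vec{a}_{i_t}}^2},
\]
and verify that $\vec{x}_\mu = \mat{A}^\top(\mat{A}\mat{A}^\top + \lambda\mathbf{I})^{-1}\vec{b}$ (hence $\vec{x}_\mu \in \range(\mat{A}^\top)$) satisfies the ridge normal equations $\mat{A}^\top(\vec{b} - \mat{A}\vec{x}_\mu) = \lambda\vec{x}_\mu$. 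Subtracting $\vec{x}_\mu$ and using $\lambda = \tfrac{1-\mu}{\mu}\norm{\mat{A}}_{\rm F}^2$ produces the error recursion
\[
\vec{x}_{t+1} - \vec{x}_\mu = \mu\,\mat{P}_{i_t}(\vec{x}_t - \vec{x}_\mu) + \vec{w}_{i_t}, \qquad \vec{w}_{i_t} \coloneqq -(1-\mu)\vec{x}_\mu + \mu\,\frac{b_{i_t} - \vec{a}_{i_t}^\top\vec{x}_\mu}{\norm{\vec{a}_{i_t}}^2}\vec{a}_{i_t}.
\]
The two structural facts I would emphasize are that $\vec{w}_{i_t}$ depends only on the fresh index $i_t$ (not on $\vec{x}_t$) and that, by the normal equations and the sampling law \eqref{eq:rk-sample}, $\expect[\vec{w}_{i_t}] = \vec{0}$.

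Because the recursion is linear in the error, I would split $\vec{x}_t - \vec{x}_\mu = \vec{h}_t + \vec{g}_t$ along the same random path $(i_0, i_1, \dots)$, where $\vec{h}_{t+1} = \mu\,\mat{P}_{i_t}\vec{h}_t$ with $\vec{h}_0 = \vec{x}_0 - \vec{x}_\mu$, and $\vec{g}_{t+1} = \mu\,\mat{P}_{i_t}\vec{g}_t + \vec{w}_{i_t}$ with $\vec{g}_0 = \vec{0}$. Uniqueness of the recursion gives $\vec{x}_t - \vec{x}_\mu = \vec{h}_t + \vec{g}_t$, and $\norm{\vec{u}+\vec{v}}^2 \le 2\norm{\vec{u}}^2 + 2\norm{\vec{v}}^2$ then yields $\expect\,\norm{\vec{x}_t - \vec{x}_\mu}^2 \le 2\,\expect\,\norm{\vec{h}_t}^2 + 2\,\expect\,\norm{\vec{g}_t}^2$, which is the source of the two factors of $2$ in the statement. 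A short induction—using that $\mat{P}_{i_t}$, $\vec{a}_{i_t}$, and $\vec{x}_\mu$ all lie in or preserve $\range(\mat{A}^\top)$—shows $\vec{h}_t, \vec{g}_t \in \range(\mat{A}^\top)$, so Lemma~\ref{lem:demmel} (which only uses membership in $\range(\mat{A}^\top)$) applies to both.

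For the homogeneous part I would iterate the one-step bound exactly as in Lemma~\ref{lem:magnitudes}: conditioning on the past and averaging over $i_t$ gives $\expect\,\norm{\mat{P}_{i_t}\vec{h}_t}^2 = \vec{h}_t^\top\bar{\mat{P}}\,\vec{h}_t \le (1 - \kappa_{\rm dem}^{-2})\norm{\vec{h}_t}^2$ with $\bar{\mat{P}} \coloneqq \mathbf{I} - \mat{A}^\top\mat{A}/\norm{\mat{A}}_{\rm F}^2$, so the factor $\mu^2$ compounds to $\expect\,\norm{\vec{h}_t}^2 \le [\mu^2(1 - \kappa_{\rm dem}^{-2})]^t\,\norm{\vec{x}_0 - \vec{x}_\mu}^2$. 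For the particular part I would take full expectations of the squared recursion. The identity $\mat{P}_{i_t}\vec{a}_{i_t} = \vec{0}$ collapses the cross term to $-2\mu(1-\mu)\,\expect[\vec{g}_t]^\top\bar{\mat{P}}\,\vec{x}_\mu$, and since $\expect[\vec{w}_{i_t}] = \vec{0}$ forces $\expect[\vec{g}_t] = \vec{0}$ for every $t$, it vanishes. A direct computation with \eqref{eq:rk-sample} gives $\expect\,\norm{\vec{w}_{i_t}}^2 = \mu^2\norm{\vec{b} - \mat{A}\vec{x}_\mu}^2/\norm{\mat{A}}_{\rm F}^2 - (1-\mu)^2\norm{\vec{x}_\mu}^2 \le \mu^2\norm{\vec{b} - \mat{A}\vec{x}_\mu}^2/\norm{\mat{A}}_{\rm F}^2$, whence
\[
\expect\,\norm{\vec{g}_{t+1}}^2 \le \mu^2(1 - \kappa_{\rm dem}^{-2})\,\expect\,\norm{\vec{g}_t}^2 + \mu^2\,\frac{\norm{\vec{b} - \mat{A}\vec{x}_\mu}^2}{\norm{\mat{A}}_{\rm F}^2}.
\]
Summing the geometric series from $\vec{g}_0 = \vec{0}$ and using $1 - \kappa_{\rm dem}^{-2} \le 1$ to replace the denominator $1 - \mu^2(1 - \kappa_{\rm dem}^{-2})$ by $1 - \mu^2$ gives $\expect\,\norm{\vec{g}_t}^2 \le \tfrac{\mu^2}{(1-\mu^2)\norm{\mat{A}}_{\rm F}^2}\norm{\vec{b} - \mat{A}\vec{x}_\mu}^2 = \tfrac{\mu}{(1+\mu)\lambda}\norm{\vec{b} - \mat{A}\vec{x}_\mu}^2$. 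Inserting both bounds into the factor-of-$2$ inequality yields the theorem.

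The main obstacle is the cross term: a single recursion for $\expect\,\norm{\vec{x}_t - \vec{x}_\mu}^2$ generates $-2\mu(1-\mu)(\vec{x}_t - \vec{x}_\mu)^\top\bar{\mat{P}}\,\vec{x}_\mu$, whose sign is indeterminate and which does not fit the clean contraction-plus-constant template. The homogeneous/particular split is precisely what tames it, since the particular part is centered ($\expect[\vec{g}_t] = \vec{0}$) and so contributes no net cross term, at the mild cost of the factors of $2$. The remaining work—the range-membership induction and the two expectations over $i_t$—is routine.
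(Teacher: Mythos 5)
Your proposal is correct and follows essentially the same route as the paper: the identical bias--variance splitting of the error recursion (your $\vec{h}_t,\vec{g}_t$ are the paper's $\vec{m}_t,\vec{v}_t$), the same factor-of-two inequality, and the same geometric-series bounds. The only cosmetic difference is in the variance step, where you expand $\expect\,\norm{\vec{w}_{i_t}}^2$ directly and discard the nonpositive term $-(1-\mu)^2\norm{\vec{x}_\mu}^2$, while the paper exploits $\expect[\vec{v}_{t+1}]=\vec{0}$ to pass to $\expect\,\norm{\vec{v}_{t+1}+(1-\mu)\vec{x}_\mu}^2$; both hinge on the same zero-mean property and yield the same bound.
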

Compared to the error bounds for RK, the regularization plays a key role in speeding up the convergence and controlling the size of the horizon.
The proof of Theorem~\ref{thm:rkrr} can be found in \ref{app:TARK-RR_proof}.

\begin{algorithm}[t]
\caption{Tail-averaged randomized Kaczmarz for ridge regression (TA\rkrr{})} \label{alg:TA\rkrr{}}
\begin{algorithmic}[1]
\Require Matrix $\mat{A}$, vector $\vec{b}$, initial estimate $\vec{x}_0 \in \real^d$, regularization $\mu$, burn-in time $t_{\rm b}$, and final time $t$
% \State $\vec{x}_0 = \vec{0}$ 
\For{$s$ in $0, \ldots, t-2$}
    \State Sample $i \sim \norm{\vec{a}_i}^2 /\norm{\mat{A}}_{\rm F}^2$ 
    \State $\vec{x}_{s+1/2}= \vec{x}_s + (b_i - \vec{a}_i^\top \vec{x}_s)\,\vec{a}_i / \norm{\vec{a}_i}^2$
    \State $\vec{x}_{s+1}= \mu\, \vec{x}_{s+1/2}$
    \EndFor 
\State $\overline{\vec{x}}_t = \bigl(\sum_{s = t_{\rm b}}^{t-1} \vec{x}_s\bigr) / (t-t_{\rm b})$ \\
\Return $\overline{\vec{x}}_t$ 
\end{algorithmic}
\end{algorithm}

\subsection{Tail-averaged randomized Kaczmarz for ridge regression}
Similar to RK, the finite horizon of RK-RR can be overcome by using tail averaging. The resulting method is
\textit{tail-averaged randomized Kaczmarz for ridge regression} (TARK-RR);
see Algorithm~\ref{alg:TA\rkrr{}}.
The following theorem quantifies the convergence rate of TA\rkrr{}:
\begin{theorem}[Mean square error for TA\rkrr{}]
\label{thm:variance_mu}
Assume $\vec{x}_0 \in \operatorname{range}(\mat{A}^\top)$ and recall that the ridge parameter is  $\lambda = (1-\mu)/\mu \cdot \norm{\mat{A}}_{\rm F}^2$.
Then Algorithm~\ref{alg:TA\rkrr{}} converges to the ridge-regularized solution $\vec{x}_\mu$ \eqref{eq:solution} at a rate that balances exponential and polynomial convergence:
\begin{equation*}
    \expect\, \norm{\overline{\vec{x}}_t - \vec{x}_\mu}^2 \leq 
    \underbrace{2\, [\mu^2
    (1 - \kappa_{\rm dem}^{-2})]^{t_{\rm b}} \, \norm{\vec{x}_0 - \vec{x}_\mu}^2}_{\text{exponential convergence}} 
    + \underbrace{ \frac{2 \mu}{(t - t_{\rm b}) (1-\mu) \lambda}\, \norm{\vec{b} - \mat{A}\vec{x}_\mu}^2}_{\text{polynomial convergence}}.
\end{equation*}
\end{theorem}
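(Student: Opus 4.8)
The plan is to replay the proof of Theorem~\ref{thm:variance_simple} almost verbatim, re-deriving the three supporting lemmas for the shrunken iteration \eqref{eq:alternating} and then repeating the double-sum calculation with $\vec{x}_\star$ replaced by $\vec{x}_\mu$. The only genuinely new ingredient is identifying the correct stationary point and contraction factor for the two-stage update.

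First I would prove the RK-RR analog of Lemma~\ref{lem:expectations}. Taking the conditional expectation of \eqref{eq:alternating} over $i_t$ with the probabilities \eqref{eq:rk-sample} gives
\[
    \expect\,\bigl[\vec{x}_{t+1} \mid \vec{x}_t\bigr] = \mu\biggl[\Bigl(\Id - \frac{\mat{A}^\top\mat{A}}{\norm{\mat{A}}_{\rm F}^2}\Bigr)\vec{x}_t + \frac{\mat{A}^\top\vec{b}}{\norm{\mat{A}}_{\rm F}^2}\biggr].
\]
The crux is to verify that $\vec{x}_\mu$ is the fixed point of this affine map: substituting the ridge normal equations $\mat{A}^\top\vec{b} = (\mat{A}^\top\mat{A} + \lambda\Id)\vec{x}_\mu$ together with the tuned relation $\lambda = \tfrac{1-\mu}{\mu}\norm{\mat{A}}_{\rm F}^2$ collapses the right-hand side to exactly $\vec{x}_\mu$. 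Subtracting this fixed-point identity then produces the clean recursion $\expect[\vec{x}_{t+1} - \vec{x}_\mu \mid \vec{x}_t] = \mat{M}(\vec{x}_t - \vec{x}_\mu)$ with $\mat{M} \coloneqq \mu(\Id - \mat{A}^\top\mat{A}/\norm{\mat{A}}_{\rm F}^2)$, and iterating yields $\expect[\vec{x}_s - \vec{x}_\mu \mid \vec{x}_r] = \mat{M}^{s-r}(\vec{x}_r - \vec{x}_\mu)$.

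Next I would record the contraction estimate replacing Lemma~\ref{lem:demmel}. Each update in \eqref{eq:alternating} adds a multiple of $\vec{a}_{i_t} \in \range(\mat{A}^\top)$ and then rescales by $\mu$, so every iterate stays in $\range(\mat{A}^\top)$; likewise $\vec{x}_\mu = (\mat{A}^\top\mat{A} + \lambda\Id)^{-1}\mat{A}^\top\vec{b} \in \range(\mat{A}^\top)$. Expanding $\vec{x}_r - \vec{x}_\mu$ in the right singular vectors of $\mat{A}$ shows that the symmetric factor $\Id - \mat{A}^\top\mat{A}/\norm{\mat{A}}_{\rm F}^2$ acts on this subspace with spectral radius $1 - \kappa_{\rm dem}^{-2}$, giving $(\vec{x}_r - \vec{x}_\mu)^\top\mat{M}^{s-r}(\vec{x}_r - \vec{x}_\mu) \le [\mu(1-\kappa_{\rm dem}^{-2})]^{s-r}\norm{\vec{x}_r - \vec{x}_\mu}^2$. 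The per-iterate mean square error bound that plays the role of Lemma~\ref{lem:magnitudes} is supplied directly by Theorem~\ref{thm:rkrr}, so it need not be re-proved here.

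Finally I would assemble these pieces exactly as in the proof of Theorem~\ref{thm:variance_simple}: expand $\expect\norm{\overline{\vec{x}}_t - \vec{x}_\mu}^2$ as $\tfrac{1}{(t-t_{\rm b})^2}\sum_{r,s=t_{\rm b}}^{t-1}\expect[(\vec{x}_r - \vec{x}_\mu)^\top(\vec{x}_s - \vec{x}_\mu)]$, apply the tower property and the contraction estimate to reduce each cross term (for $r \le s$) to $[\mu(1-\kappa_{\rm dem}^{-2})]^{s-r}\expect\norm{\vec{x}_r - \vec{x}_\mu}^2$, and insert Theorem~\ref{thm:rkrr}. This splits each term into a bias piece bounded uniformly by $2[\mu^2(1-\kappa_{\rm dem}^{-2})]^{t_{\rm b}}\norm{\vec{x}_0 - \vec{x}_\mu}^2$ (since $s + r \ge 2t_{\rm b}$ and $s \ge t_{\rm b}$) and a variance piece proportional to $\mu^{|s-r|}\norm{\vec{b} - \mat{A}\vec{x}_\mu}^2$ after discarding the harmless factor $(1-\kappa_{\rm dem}^{-2})^{|s-r|} \le 1$. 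Summing the bias uniformly reproduces the exponential term, and summing the geometric variance series via $\sum_{r,s}\mu^{|s-r|} \le (t-t_{\rm b})\tfrac{1+\mu}{1-\mu}$ makes the factor $\tfrac{1+\mu}{1-\mu}$ cancel against the $\tfrac{2\mu}{(1+\mu)\lambda}$ prefactor inherited from Theorem~\ref{thm:rkrr}, collapsing to the stated $\tfrac{2\mu}{(t-t_{\rm b})(1-\mu)\lambda}\norm{\vec{b}-\mat{A}\vec{x}_\mu}^2$. The main obstacle is the fixed-point verification in the first step: the shrinkage $\mu$ moves the stationary point away from $\vec{x}_\star$, and it is exactly the calibrated choice $\lambda = \tfrac{1-\mu}{\mu}\norm{\mat{A}}_{\rm F}^2$ that restores $\vec{x}_\mu$ as the fixed point and makes the contraction factors and prefactors line up; the rest is careful constant bookkeeping.
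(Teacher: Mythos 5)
Your proposal is correct and follows essentially the same route as the paper: the multi-step expectation lemma with fixed point $\vec{x}_\mu$ and contraction matrix $\mu\bigl[\Id - \mat{A}^\top\mat{A}/\norm{\mat{A}}_{\rm F}^2\bigr]$, the per-iterate bound imported from Theorem~\ref{thm:rkrr}, and the same double-sum assembly with the geometric bound $\sum_{r,s}\mu^{|s-r|} \le (t-t_{\rm b})\tfrac{1+\mu}{1-\mu}$. The only cosmetic difference is that you retain the factor $(1-\kappa_{\rm dem}^{-2})^{s-r}$ in the cross-term bound before discarding it, whereas the paper drops it immediately; the resulting constants are identical.
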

The proof of Theorem~\ref{thm:variance_mu} can be found in \ref{app:TARK-RR_proof}.  See Table~\ref{tab:comparison-rr} for a comparison of TARK-RR with other RK-based approaches. 

\begin{table}[t]
    \centering
    \begin{tabular}{lcccccl}\toprule
     Method & Final rate & Handling of $\lambda\, \norm{\vec{x}}^2$ & Length-$d$ vectors? \\  \midrule
    Dual methods  \cite{ivanov2013kaczmarz, hefny2017rows} & \textcolor{goodcolor}{Exponential}  & \textcolor{goodcolor}{Deterministic} & \textcolor{badcolor}{No \xmark} \\ \midrule
    RK on \eqref{eq:rr_tall_system}& \textcolor{badcolor}{Finite horizon} &  \textcolor{badcolor}{Stochastic} & \textcolor{goodcolor}{Yes \cmark}  \\
    TARK on \eqref{eq:rr_tall_system} & \textcolor{goodcolor}{Polynomial}  & \textcolor{badcolor}{Stochastic}  & \textcolor{goodcolor}{Yes \cmark} \\  \midrule 
    \rkrr{} & \textcolor{badcolor}{Finite horizon} & \textcolor{goodcolor}{Deterministic}  & \textcolor{goodcolor}{Yes \cmark} 
    \\ 
    TA\rkrr{} & \textcolor{goodcolor}{Polynomial} & \textcolor{goodcolor}{Deterministic}  & \textcolor{goodcolor}{Yes \cmark} 
    \\ \bottomrule 
    \end{tabular}
    \caption{RK variants for ridge regression problems. The table lists the final convergence rate, how the regularization is handled, and whether the method only manipulates length-$d$ vectors.}
    \label{tab:comparison-rr}
\end{table}

\subsection{Numerical demonstration} \label{sec:numerics2}

This section repeats the polynomial regression experiment from Section~\ref{sec:numerics} but uses an unstable representation of the regression polynomial $p(u) = \sum_{j=0}^{d-1} x_j u^j$ as a linear combination of monomials.
This change of representation leads to an ill-conditioned matrix with condition number $\norm{\mat{A}}\, \norm{\smash{\mat{A}^+}} \approx 6\times 10^8$.

\begin{figure}
    \centering
    \includegraphics[width=\textwidth]{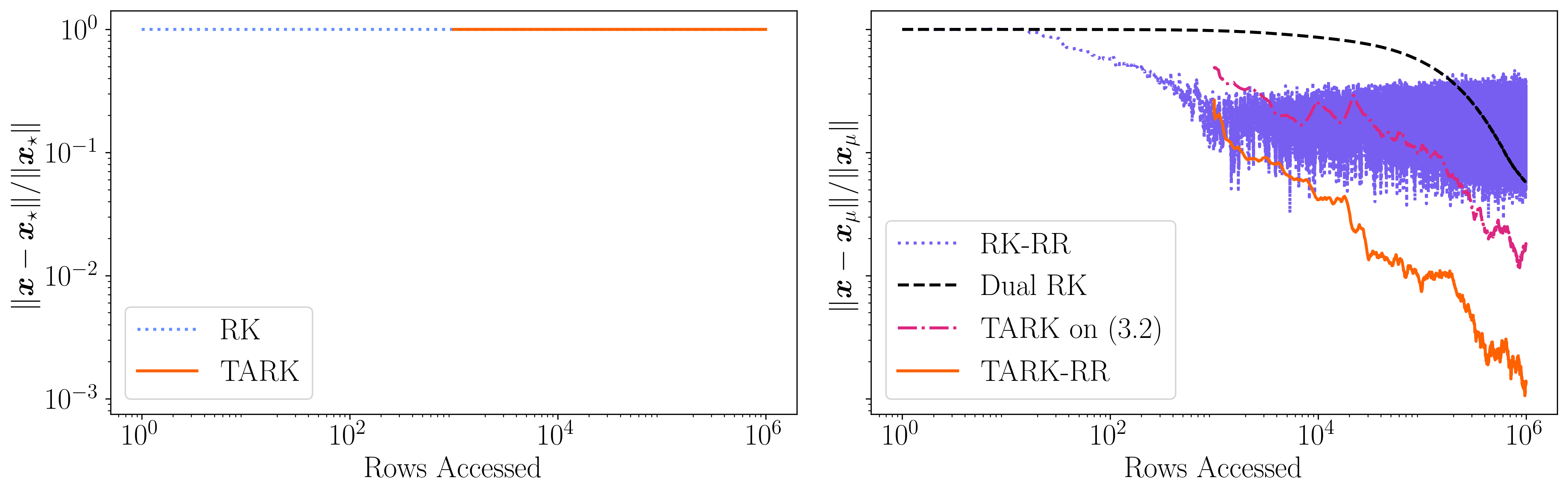}
    \caption{Relative errors for RK methods applied to un-regularized (\emph{left}) and regularized (\emph{right}) polynomial regression problems.}
    \label{fig:TA\rkrr{}}
\end{figure}

The left panel of Figure~\ref{fig:TA\rkrr{}} demonstrates that RK and TARK converge extremely slowly for the ordinary least-squares system \eqref{eq:ls}, motivating the need for regularization.
The right panel of Figure~\ref{fig:TA\rkrr{}} shows the results of adding ridge-regularization with $\mu=0.999$.
This approach changes the solution and enables TA\rkrr{} to make faster progress than the unregularized methods. 
Also pictured are the dual RK method of \cite{hefny2017rows} and TARK applied to the augmented system \eqref{eq:rr_tall_system}.
These algorithms make significantly less progress than TARK-RR, providing evidence that approaches based on dual variables or the augmented system \eqref{eq:rr_tall_system} are not competitive for highly overdetermined linear least-squares problems.

In summary, the experiments suggest that the alternating minimization \eqref{eq:alternating} is the most effective way of incorporating ridge regularization into Kaczmarz-type algorithms for linear least-squares problems.
This observation may have implications for nonlinear optimization, including the recently proposed SPRING algorithm for variational Monte Carlo simulation \cite{goldshlager2024kaczmarz}. 

\section{Conclusion}

Randomized Kaczmarz has long served as a simple, explicitly analyzable algorithm that responds to the key scaling challenges of overdetermined linear least-squares problems.
In addition, the detailed analysis of randomized Kaczmarz has highlighted broader opportunities to understand and improve stochastic gradient descent methods \cite{needell2014stochastic}.
Building on this past research, the current work highlights the opportunity to incorporate tail averaging within randomized Kaczmarz to improve the convergence rate.
These results are encouraging regarding the use of tail averaging, even beyond the linear least-squares problem.
As further opportunities, this paper points toward preconditioning and block-wise arithmetic as opportunities to speed up the performance of large-scale linear least-squares solvers,
and it suggests alternating minimization as an effective technique for incorporating ridge regularization into Kaczmarz-type algorithms.

\section*{Acknowledgments}
The authors thank Haoxuan Chen, Zhiyan Ding, Micha{\l} Derezi{\'n}ski, Jamie Haddock, Jiang Hu, Lin Lin, Anna Ma, Christopher Musco, Deanna Needell, Kevin Shu, Joel Tropp, Roman Vershynin, and Jonathan Weare for helpful conversations.
ENE acknowledges support from the Department of Energy under Award Number DE-SC0021110 and, under the aegis of Joel Tropp, from NSF FRG 1952777 and the Carver Mead New Horizons Fund.
GG acknowledges support from the Department of Energy under Award Number DE-SC0023112.

\appendix

\section{TARK with increasing burn-in time} \label{app:increase}
To control both bias and variance, we recommend implementing TARK with a burn-in time that comprises a quarter to half of the final time, $t_{\rm b} \in [t/4,t/2]$.
In practice, we may not know the final time $t$ in advance, opting to run the algorithm for as many iterations as needed for the solution to meet some error tolerance.
To implement TARK in a storage-efficient manner in this setting, one can use the following TARK implementation with an increasing burn-in time $t_{\rm b} = 2^{\lfloor \log_2(t) \rfloor - 1}$.
The approach is based on storing just two extra vectors $\tilde{\vec{x}}_{\rm old}, \tilde{\vec{x}}_{\rm new} \in \real^d$, where
$\tilde{\vec{x}}_{\rm old}$ sums the TARK iterates from time $2^{\lfloor \log_2(t) \rfloor - 1}$ to $2^{\lfloor \log_2(t) \rfloor}$ and $\tilde{\vec{x}}_{\rm new}$ sums the TARK iterates from time $2^{\lfloor \log_2(t) \rfloor}$ to $t$.
When the iteration time $t$ hits a power of $2$, the two vectors are updated via $\tilde{\vec{x}}_{\rm old} \leftarrow \tilde{\vec{x}}_{\rm new}$ and $\tilde{\vec{x}}_{\rm new} \leftarrow \vec{0}$.
At any time $t$, the TARK result can be quickly calculated using $\tilde{\vec{x}}_{\rm old}$ and $\tilde{\vec{x}}_{\rm new}$ as follows:
\begin{equation*}
    \tilde{\vec{\vec{x}}}_t = \frac{1}{t-t_{\rm b}} \sum\nolimits_{s = t_{\rm b}}^{t-1} \vec{x}_s
    = \frac{\tilde{\vec{x}}_{\rm old} + \tilde{\vec{x}}_{\rm new}}{t - t_{\rm b}}.
\end{equation*}
See Algorithm~\ref{alg:TARK_increasing} for pseudocode.

\begin{algorithm}[t]
\caption{TARK with increasing burn-in time} \label{alg:TARK_increasing}
\begin{algorithmic}[1]
\Require Matrix $\mat{A} \in \real^{n \times d}$, vector $\vec{b} \in \real^n$, initial estimate $\vec{x}_0 \in \real^d$, and final time $t$
\State $\tilde{\vec{x}}_{\rm old} = \vec{0}$, $\tilde{\vec{x}}_{\rm new} = \vec{0}$
\For{$s$ in $0, \ldots, t-2$}
    \State Sample $i \sim \norm{\vec{a}_i}^2 /\norm{\mat{A}}_{\rm F}^2$ 
    \State $\vec{x}_{s+1}= \vec{x}_s + (b_i - \vec{a}_i^\top \vec{x}_s) \,\vec{a}_i / \norm{\vec{a}_i}^2$
    \State $\tilde{\vec{x}}_{\rm new} = \tilde{\vec{x}}_{\rm new} + \vec{x}_{s+1}$
    \If{$s + 1$ is a power of $2$}
        \State $\tilde{\vec{x}}_{\rm old} = \tilde{\vec{x}}_{\rm new}$, $\tilde{\vec{x}}_{\rm new} = \vec{0}$
    \EndIf
\EndFor
\State $t_{\rm b} = 2^{\lfloor \log_2(t) \rfloor - 1}$
\State $\overline{\vec{x}}_t = (\tilde{\vec{x}}_{\rm old} + \tilde{\vec{x}}_{\rm new}) / (t-t_{\rm b})$ \\
\Return $\overline{\vec{x}}_t$ 
\end{algorithmic}
\end{algorithm}

\section{Lower bounds} \label{app:lower}
The following proposition constrains the best performance that a row-access method can attain.
\begin{proposition}[Lower bound on mean square error] \label{prop:need_polynomial}
    Fix $\varepsilon > 0$ and $d \geq 1$.
    Any algorithm that can solve all least-squares problem $\min_{\vec{x}}\,\lVert \vec{b} - \mat{A} \vec{x} \rVert^2$ with mean square error
    \begin{equation}
    \label{eq:guarantee}
        \expect\, \lVert \hat{\vec{x}} - \vec{x}_\star \rVert^2 \leq \varepsilon \, \norm{\smash{\mat{A}^+}}^2\, \lVert \vec{b} - \mat{A} \vec{x_\star} \rVert^2,
    \end{equation}
    must allow access to $t \geq d / \varepsilon$ entries of $\vec{b}$.
\end{proposition}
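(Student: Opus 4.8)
The plan is to prove a matching information-theoretic (Bayesian) lower bound by the standard recipe: exhibit a prior over hard instances, observe that a guarantee holding for \emph{every} problem must in particular hold in expectation over the prior, and then lower bound the Bayes risk of estimating $\vec{x}_\star$ from $t$ observed entries of $\vec{b}$. Concretely, if the guarantee \eqref{eq:guarantee} holds pointwise, then averaging over a prior on $\vec{b}$ gives $\expect\,\lVert \hat{\vec{x}} - \vec{x}_\star\rVert^2 \le \varepsilon\,\norm{\smash{\mat{A}^+}}^2\,\expect\,\lVert \vec{b} - \mat{A}\vec{x}_\star\rVert^2$, whose left-hand side is at least the Bayes-optimal risk. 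It then suffices to make the ratio of Bayes risk to the expected right-hand side at least $d/t$.

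For the hard family I would fix the matrix and randomize $\vec{b}$. Take $\mat{A} \in \real^{md \times d}$ to consist of $m$ copies of each standard basis row $\vec{e}_j^\top$, so that $\mat{A}^\top \mat{A} = m\,\Id$ and $\norm{\smash{\mat{A}^+}}^2 = 1/m$; here $m$ may be taken arbitrarily large since there is no constraint on the problem size. This choice makes the least-squares problem decouple into $d$ independent one-dimensional mean-estimation problems: grouping the rows by coordinate, one has $(\vec{x}_\star)_j = \bar b_j$, the mean of the $m$ entries of $\vec{b}$ in group $j$, and $\lVert \vec{b} - \mat{A}\vec{x}_\star\rVert^2 = \sum_j \sum_k (b_{(j,k)} - \bar b_j)^2$ is the total within-group variation. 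For the prior I would set $b_{(j,k)} = \theta_j + \eta_{(j,k)}$ with i.i.d.\ noise $\eta_{(j,k)} \sim \normal(0,\sigma^2)$ and a diffuse (eventually flat) prior on each $\theta_j$.

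The core computation is then elementary. An algorithm that reads $t$ entries observes some $t_j$ entries in group $j$ with $\sum_j t_j = t$; since the groups are independent and the entries in group $j$ inform only $\theta_j$, the posterior variance of $(\vec{x}_\star)_j$ given the observations tends to $\sigma^2/t_j$ in the regime $m \to \infty$ with a flat prior. Convexity (equivalently AM--HM) gives $\sum_j 1/t_j \ge d^2/t$ for every allocation with $\sum_j t_j = t$, so the Bayes risk is at least $d^2\sigma^2/t$. Meanwhile $\norm{\smash{\mat{A}^+}}^2\,\expect\,\lVert \vec{b} - \mat{A}\vec{x}_\star\rVert^2 = \tfrac{1}{m}\, d(m-1)\sigma^2 \to d\sigma^2$. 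Chaining these with the averaged guarantee, $d^2\sigma^2/t \le \expect\,\lVert \hat{\vec{x}} - \vec{x}_\star\rVert^2 \le \varepsilon\, d\sigma^2$, yields $d/t \le \varepsilon$, that is $t \ge d/\varepsilon$.

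The main obstacle is extracting the sharp constant rather than merely the correct $\order(1/t)$ scaling: a fixed-size i.i.d.\ Gaussian model only yields $t \gtrsim d/(4\varepsilon)$, and recovering the optimal $d/\varepsilon$ forces the double limit $m \to \infty$ (so that the residual dominates the information carried by each observed entry) together with a flat prior on the $\theta_j$. I would make this rigorous by working with a proper prior $\theta_j \sim \normal(0,\tau^2)$, computing the exact posterior variances, and sending $\tau \to \infty$ and $m \to \infty$ while checking that $\norm{\smash{\mat{A}^+}}$ and the expected residual are unaffected. Two secondary points need care. Adaptivity of the entry selection is harmless because the pointwise bound $\sum_j 1/t_j \ge d^2/t$ holds for every realized allocation summing to $t$, so no adaptive rule beats the even split. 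And the fact that we estimate the full-data mean $\bar b_j$ rather than $\theta_j$ itself introduces a discrepancy that vanishes as $m \to \infty$, which the limiting argument absorbs cleanly.
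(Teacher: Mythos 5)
Your proposal is correct and follows essentially the same route as the paper: the hard instance ($m$ copies of each coordinate row, so $\norm{\smash{\mat{A}^+}}^2 = 1/m$), the Gaussian prior (your hierarchical model $b_{(j,k)} = \theta_j + \eta_{(j,k)}$ with $\theta_j \sim \normal(0,\tau^2)$ is exactly the paper's covariance $\Id_m + v\vec{1}_m\vec{1}_m^\top$ with $v=\tau^2$), the Bayes-risk/conditional-variance lower bound, the convexity argument for the even allocation, and the double limit $\tau^2 \to \infty$, $m \to \infty$ to sharpen the constant to $d/\varepsilon$. No substantive differences to report.
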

Previous results \cite{chen2019active} have demonstrated the same $t = \Omega(d/\varepsilon)$ scaling; see also the discussion in \cite[sec.~1.1.4]{jain2018parallelizing}.

\begin{proof}
Consider applying a least-squares solver to
a a random least-squares problem, where
\begin{equation}
\label{eq:random_class}
    \mat{A} = \begin{bmatrix}
        \vec{1}_m & \cdots & \vec{0}_m \\
        \vdots & \ddots &\vdots  \\
        \vec{0}_m & \cdots & \vec{1}_m
    \end{bmatrix} \in \real^{m d \times d},
    \quad
    \vec{b} = \begin{bmatrix}
        \vec{b}_1 \\
        \vdots \\
        \vec{b}_d
    \end{bmatrix} \in \real^{md},
    \quad \text{and }
    \vec{b}_i \sim \text{iid }  \mathcal{N}\bigl(\vec{0}, \mat{\Sigma} \bigr),
    \text{  for } \mat{\Sigma} = \Id_m + v \vec{1}_m \vec{1}_m^\top.
\end{equation}
Here, $\vec{0}_m, \vec{1}_m \in \real^m$ are the vectors of all zeroes and all ones, $m$ controls the aspect ratio in the problem, and $v > 0$ is a variance parameter.
Each problem decomposes into the sum of $d$ simpler problems:
\begin{equation*}
    \min_{\vec{x} \in \real^d} \, \norm{\vec{b} - \mat{A} \vec{x}}^2
    = \min_{\vec{x} \in \real^d}
    \sum\nolimits_{i = 1}^d \lVert \,\vec{b}_i - \vec{1}_m x_i\, \rVert^2.
\end{equation*}
Hence, each entry of the least-squares solution $\vec{x}_\star \in \real^d$ is an arithmetic mean of $m$ entries of the output vector: 
\begin{equation*}
    \vec{x}_\star = \begin{bmatrix} x_{\star, 1} & \cdots & x_{\star, d} \end{bmatrix}^\top = \begin{bmatrix} \frac{1}{m} \vec{1}_m^\top \vec{b}_1 & \cdots & \frac{1}{m} \vec{1}_m^\top \vec{b}_d
    \end{bmatrix}^\top.
\end{equation*}
In this random problem class, 
a direct calculation using the Gaussian covariance matrix $\mat{\Sigma} = \Id_m + v \vec{1}_m \vec{1}_m^\top$ shows
\begin{equation*}
    \expect\, \norm{\vec{b} - \mat{A} \vec{x_\star}}^2
    = \sum\nolimits_{i=1}^d \expect\, \Bigl\lVert  \vec{b}_i - \tfrac{1}{m} \vec{1}_m \vec{1}_m^\top \vec{b}_i \Bigr\rVert^2
    = d (m-1).
\end{equation*}
This is the mean square error of the least-squares solution.
Further, the matrix $\mat{A}$ has singular values $\sigma_i = \sqrt{m}$ for $i = 1, \ldots, m$, so $\lVert \mat{A}^+ \rVert^2 = \frac{1}{m}$.

Now suppose a least-squares solver accesses certain entries of $\vec{b}_1$ that are indexed by $\set{S}_1 \subseteq \{1, \ldots, m\}$, certain entries of $\vec{b}_2$ that are indexed by $\set{S}_2 \subseteq \{1, \ldots, m\}$, and so on.
Given a subset of $k = |\textsf{S}_i|$ revealed entries of $\vec{b}_i$, evaluate the Gaussian conditional mean and variance formulas for the unrevealed entries $\vec{b}_{i, \textsf{S}_i^c}$ as follows:
\begin{equation*}
    \mathbb{E}\,\bigl[\vec{b}_{i, \textsf{S}_i^c} \,\big|\, \vec{b}_{i, \textsf{S}_i} \bigr]
    = \mat{\Sigma}_{\textsf{S}_i^c, \textsf{S}_i}\, \mat{\Sigma}_{\textsf{S}_i, \textsf{S}_i}^{-1}\,
    \vec{b}_{i,\textsf{S}_i}
    = \frac{v}{1 + vk} \vec{1}_{m-k} \vec{1}_k^\top \vec{b}_{i,\textsf{S}_i}.
\end{equation*}
and also
\begin{equation*}
    \operatorname{cov}\,\bigl[\vec{b}_{i, \textsf{S}_i^c} \,|\, \vec{b}_{i, \textsf{S}_i}]
    = \mat{\Sigma}_{\textsf{S}_i, \textsf{S}_i} - \mat{\Sigma}_{\textsf{S}_i^c, \textsf{S}_i}\, 
    \mat{\Sigma}_{\textsf{S}_i, \textsf{S}_i}^{-1}\,
    \mat{\Sigma}_{\textsf{S}_i, \textsf{S}_i^c}
    = \Id_{m-k} + \frac{v}{1 + vk} \vec{1}_{m-k} \vec{1}_{m-k}^\top.
\end{equation*}
Therefore, the conditional expectation formula for $x_{\star, i}$ is
\begin{equation*}
    \mathbb{E}\,\bigl[ x_{\star, i} \,\big|\, \vec{b}_{i, \textsf{S}_i} \bigr]
    = \mathbb{E}\,\biggl[\frac{1}{m} \vec{1}_m^\top  \vec{b}_i \,\bigg|\, \vec{b}_{i, \textsf{S}_i} \biggr]
    = \frac{1}{m} \biggl[1 + \frac{v(m-k)}{1 + vk} \biggr] \vec{1}_k^\top \vec{b}_{i, \textsf{S}_i}. 
\end{equation*}
The conditional variance formula for $x_{\star, i}$ is
\begin{equation*}
    \operatorname{Var}\,\bigl[x_{\star, i} \,\big|\, \vec{b}_{i, \textsf{S}_i} \bigr]
    = \operatorname{Var}\,\biggl[\frac{1}{m} \vec{1}_m^\top \vec{b}_i \,\bigg|\, \vec{b}_{i, \textsf{S}_i} \biggr]
    = \frac{\vec{1}_{m-k}^\top \operatorname{cov}\,\bigl[\vec{b}_{i, \textsf{S}_i^c} \,\big|\, \vec{b}_{i, \textsf{S}_i} \bigr]\, \vec{1}_{m-k}}{m^2}
    = \frac{m - k}{m^2}\biggl[1 + \frac{v (m-k)}{1 + vk} \biggr]
\end{equation*}
The vector with entries $\mu_i = \mathbb{E}\,\bigl[ x_{\star, i} \,\big|\, \vec{b}_{i, \textsf{S}_i} \bigr]$ optimizes the mean square error of approximating $\vec{x}_\star$.
\begin{equation*}
    \mu_i = \operatornamewithlimits{argmin}_{\hat{x}_i} \expect\,\bigl[ |\,\hat{x}_i - x_{\star, i} |^2 \,|\, \vec{b}_{i, \set{S}_i} \bigr].
\end{equation*}
Therefore, taking the expectation over the unrevealed entries of $\vec{b}$, it holds for any estimator $\hat{\vec{x}}$: 
\begin{align*}
    \expect\, \bigl[ \lVert \,\hat{\vec{x}} - \vec{x}_\star \rVert^2 \,|\, \vec{b}_{1, \set{S}_1}, \ldots, \vec{b}_{d, \set{S}_d} \bigr]
    &\geq
    \expect\, \bigl[ \lVert\, \vec{\mu} - \vec{x}_\star \rVert^2 \,|\, \vec{b}_{1, \set{S}_1}, \ldots, \vec{b}_{d, \set{S}_d} \bigr] \\
    & = \frac{1}{m^2} \sum\nolimits_{i=1}^d (m - |\textsf{S}_i|)\,\biggl[1 + \frac{v(m-|\textsf{S}_i|)}{1 + v |\textsf{S}_i|} \biggr].
\end{align*}
Now let $t$ be the maximum number of entries accessed,
and observe that $x \mapsto (m - x)\bigl[1 + v(m - x) / (1 + vx)\bigr]$ is convex and decreasing, so
the conditional mean square error is bounded from below by setting $|\textsf{S}_i| = t/d$ for each $i \in \{1, \ldots, d\}$:
\begin{equation*}
    \expect\, \bigl[ \lVert \hat{\vec{x}} - \vec{x}_\star \rVert^2 \,|\, \vec{b}_{1, \set{S}_1}, \ldots, \vec{b}_{d, \set{S}_d} \bigr] 
    \geq \frac{d}{m^2} (m - t/d)\,\biggl[1 + \frac{v(m - t/d)}{1 + v t / d} \biggr]
\end{equation*}
By averaging over the revealed entries, the mean square error satisfies the same error bound:
\begin{equation*}
    \expect\, \lVert \hat{\vec{x}} - \vec{x}_\star \rVert^2 \geq \frac{d}{m^2} (m - t/d)\,\biggl[1 + \frac{v(m - t/d)}{1 + v t / d} \biggr].
\end{equation*}
This is the minimal least-squares solver that an algorithm can possibly achieve after revealing $t$ entries of $\vec{b}$.

Now suppose that a least-squares solver satisfies the error bound \eqref{eq:guarantee} for each linear least-squares problem.
Then apply the least-squares solver to the random problem class \eqref{eq:random_class} and average the resulting error bounds \eqref{eq:guarantee} to yield
\begin{equation}
\label{eq:the_relation}
    \frac{d}{m^2} (m - t/d)\,\biggl[1 + \frac{v(m - t/d)}{1 + v t / d} \biggr]
    \leq \expect\, \lVert \hat{\vec{x}} - \vec{x}_\star \rVert^2
    \leq \varepsilon \,\norm{\smash{\mat{A}^+}}^2\,
    \expect\, \lVert \vec{b} - \mat{A} \vec{x_\star} \rVert^2
    = \varepsilon \frac{d(m-1)}{m}.
\end{equation}
The relation \eqref{eq:the_relation} must hold for any parameters $m$ and $v$.
Sending $v \rightarrow \infty$ implies that
\begin{equation*}
    t
    \geq \frac{d^2}{\varepsilon d + (1 - \varepsilon) \frac{d}{m}}.
\end{equation*}
Next, sending $m \rightarrow \infty$ implies that the maximal number of entries which need to be accessed by the algorithm is $t \geq d / \varepsilon$.
\end{proof}

Proposition~\ref{prop:need_polynomial} also leads to a bound on the mean square residual error $\expect\, \lVert  \vec{b} - \mat{A} \hat{\vec{x}} \rVert^2$.

\begin{corollary}[Lower bound on mean square residual error] \label{cor:need_polynomial}
    Fix $\varepsilon > 0$ and $d \geq 1$.
    Any algorithm that can solve all least-squares problem $\min_{\vec{x}}\,\lVert \vec{b} - \mat{A} \vec{x} \rVert^2$ with mean square residual error
    \begin{equation}
    \label{eq:equivalent}
        \expect\, \lVert \vec{b}  - \mat{A} \hat{\vec{x}} \rVert^2 \leq (1 + \varepsilon) \,\lVert \vec{b} - \mat{A} \vec{x_\star} \rVert^2
    \end{equation}
    must allow access to $t \geq d / \varepsilon$ entries of $\vec{b}$.
\end{corollary}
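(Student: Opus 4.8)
The plan is to deduce the corollary from Proposition~\ref{prop:need_polynomial} by showing that, for the hard problem class~\eqref{eq:random_class}, the mean square residual guarantee~\eqref{eq:equivalent} is \emph{exactly} equivalent to the mean square solution guarantee~\eqref{eq:guarantee}.

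First I would record the orthogonality decomposition of the residual. Because the least-squares solution satisfies the normal equations $\mat{A}^\top(\vec{b} - \mat{A}\vec{x}_\star) = \vec{0}$, the error $\mat{A}(\hat{\vec{x}} - \vec{x}_\star) \in \range(\mat{A})$ is orthogonal to the optimal residual $\vec{b} - \mat{A}\vec{x}_\star$, so the Pythagorean theorem gives, for any estimator $\hat{\vec{x}}$,
\[
    \expect\, \norm{\vec{b} - \mat{A}\hat{\vec{x}}}^2 = \norm{\vec{b} - \mat{A}\vec{x}_\star}^2 + \expect\, \norm{\mat{A}(\hat{\vec{x}} - \vec{x}_\star)}^2.
\]
Hence the residual bound~\eqref{eq:equivalent} is equivalent to $\expect\, \norm{\mat{A}(\hat{\vec{x}} - \vec{x}_\star)}^2 \leq \varepsilon\, \norm{\vec{b} - \mat{A}\vec{x}_\star}^2$.

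Next I would specialize to the random problem class~\eqref{eq:random_class}. There $\mat{A}$ has full column rank with every singular value equal to $\sqrt{m}$, so $\mat{A}^\top\mat{A} = m\,\Id_d$ and therefore $\norm{\mat{A}\vec{z}}^2 = m\,\norm{\vec{z}}^2$ for every $\vec{z} \in \real^d$; recall also that $\norm{\smash{\mat{A}^+}}^2 = 1/m$. Substituting $\vec{z} = \hat{\vec{x}} - \vec{x}_\star$ converts the reduced residual bound into
\[
    \expect\, \norm{\hat{\vec{x}} - \vec{x}_\star}^2 \leq \varepsilon\, \norm{\smash{\mat{A}^+}}^2\, \norm{\vec{b} - \mat{A}\vec{x}_\star}^2,
\]
which is exactly the solution guarantee~\eqref{eq:guarantee}. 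Thus any algorithm meeting~\eqref{eq:equivalent} on every least-squares problem meets~\eqref{eq:guarantee} on this class, and Proposition~\ref{prop:need_polynomial} forces $t \geq d/\varepsilon$.

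The computation is short, so I do not anticipate a serious obstacle; the only point needing care is the exact equivalence of the two error metrics. This rests on the hard instance being column-isometric up to scaling---all singular values equal---which makes $\norm{\mat{A}(\hat{\vec{x}} - \vec{x}_\star)}^2$ and $\norm{\hat{\vec{x}} - \vec{x}_\star}^2$ strictly proportional rather than merely comparable through $\kappa_{\rm dem}$. The full column rank of $\mat{A}$ additionally removes any null-space component of $\hat{\vec{x}}$ that would otherwise weaken the reduction to a one-sided inequality.
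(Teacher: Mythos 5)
Your proposal is correct, and its first half --- the Pythagorean decomposition $\expect\norm{\vec{b}-\mat{A}\hat{\vec{x}}}^2 = \norm{\vec{b}-\mat{A}\vec{x}_\star}^2 + \expect\norm{\mat{A}(\hat{\vec{x}}-\vec{x}_\star)}^2$, which converts \eqref{eq:equivalent} into $\expect\norm{\mat{A}(\hat{\vec{x}}-\vec{x}_\star)}^2\le\varepsilon\norm{\vec{b}-\mat{A}\vec{x}_\star}^2$ --- is exactly the paper's first step. The two arguments part ways in how they close the reduction to Proposition~\ref{prop:need_polynomial}. The paper stays instance-agnostic: it applies $\mat{A}^+$ to the reduced inequality, obtaining $\expect\norm{\mat{A}^+\mat{A}\hat{\vec{x}}-\vec{x}_\star}^2\le\norm{\smash{\mat{A}^+}}^2\expect\norm{\mat{A}\hat{\vec{x}}-\mat{A}\vec{x}_\star}^2\le\varepsilon\norm{\smash{\mat{A}^+}}^2\norm{\vec{b}-\mat{A}\vec{x}_\star}^2$ for \emph{every} problem, so the projected estimator $\mat{A}^+\mat{A}\hat{\vec{x}}$ satisfies \eqref{eq:guarantee} universally and the proposition applies as a black box. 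You instead specialize to the hard class \eqref{eq:random_class}, where $\mat{A}^\top\mat{A}=m\,\Id_d$ makes the two guarantees exactly proportional. That is a clean observation, but it only verifies \eqref{eq:guarantee} on the hard instances, whereas the proposition as stated quantifies over all least-squares problems; your invocation is therefore sound only because the proposition's proof never uses the guarantee outside of \eqref{eq:random_class}. You should either say this explicitly (i.e., you are reusing the proof of the proposition, not its statement) or adopt the paper's projection trick, which buys a statement-level, black-box reduction valid for arbitrary $\mat{A}$ --- including rank-deficient ones, which is precisely the null-space issue you correctly flag at the end and which the paper handles by replacing $\hat{\vec{x}}$ with $\mat{A}^+\mat{A}\hat{\vec{x}}$ rather than by restricting to full-column-rank instances.
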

\begin{proof}
    By an orthogonal decomposition, $\lVert \vec{b}  - \mat{A} \hat{\vec{x}} \rVert^2
    = \lVert  \vec{b}  - \mat{A} \vec{x}_\star \rVert^2  + \lVert \mat{A} \hat{\vec{x}}  -  \mat{A} \vec{x}_\star \rVert^2$.
    Thus, \eqref{eq:equivalent} can be rewritten as
    \begin{equation*}
        \expect\, \lVert \mat{A}  \hat{\vec{x}} - \mat{A}\vec{x}_\star \rVert^2 \leq \varepsilon \, \lVert \vec{b} - \mat{A} \vec{x_\star} \rVert^2.
    \end{equation*}
    Any algorithm that guarantees \eqref{eq:equivalent} also guarantees
    \begin{equation*}
        \expect\, \lVert  \mat{A}^+ \mat{A} \hat{\vec{x}} - \vec{x}_\star  \rVert^2 
        = \expect\, \lVert  \mat{A}^+ (\mat{A} \hat{\vec{x}} - \mat{A} \vec{x}_\star) \rVert^2 
        \leq \norm{\smash{\mat{A}^+}}^2 \expect\,\lVert \mat{A} \hat{\vec{x}} - \mat{A} \vec{x}_\star \rVert^2
        \leq \varepsilon \,\norm{\smash{\mat{A}^+}}^2 \lVert \vec{b} - \mat{A} \vec{x_\star}  \rVert^2.
    \end{equation*}
    By Proposition~\ref{prop:need_polynomial}, the algorithm must allow access to $t \geq d / \varepsilon$ entries of $\vec{b}$.
\end{proof}

\section{Achieving the lower bounds: Preconditioning and initialization} \label{app:preprocessing}
We recognize two areas of improvement for TARK.
First, the TARK mean square error bound in Theorem~\ref{thm:variance_simple} depends on the square Demmel condition number $\kappa_{\rm dem}^2$,
whereas the lower bound in Proposition~\ref{prop:need_polynomial} depends on the dimension $d$, which is always smaller.
Second, the TARK error bound suggests a burn-in period is needed to wash out the influence of the initialization $\vec{x}_0$.
The first problem can by addressed by applying TARK to a preconditioned version of the least-squares problem
\begin{equation*}
    \vec{y}_\star = \argmin_{\vec{y} \in \real^d} \,\norm{\vec{b} - (\mat{A}\mat{R}^{-1})\vec{y} }^2;\quad \vec{x}_\star = \mat{R}^{-1}\vec{y}_\star.
\end{equation*}
The second problem can be addressed using a careful choice of $\vec{x}_0 \approx \vec{x}_\star$.

Both preconditioning and finding a high-quality initialization can be computationally expensive, perhaps prohibitively expensive when $\mat{A}$ is large.
Nevertheless, the following result demonstrates that, given the computational resources to compute these objects, even a simple row-access method like TARK can achieve near-optimal results:

\begin{theorem}[Preconditioned TARK with volume sampling]\label{prop:active_learning}
Given a matrix $\mat{A} \in \real^{n \times d}$ of rank $r$ and a vector $\mat{b} \in \real^n$, consider the following algorithm:
\begin{enumerate}
    \item Calculate a thin QR decomposition $\mat{A} = \mat{Q} \mat{R}$ for $\mat{Q} \in \real^{n\times r}$.
    \item Sample a subset of $r$ rows $\set{S} \subseteq \{1, \ldots, n\}$ from the square-volume distribution \cite{derezinski2018reverse}
    \begin{equation*}
        \mathbb{P}(\set{S}) = \frac{\operatorname{det}(\mat{Q}(\set{S},:))^2}{\sum\nolimits_{|\set{S}'| = r} \operatorname{det}(\mat{Q}(\set{S'},:))^2}.
    \end{equation*}
    \item Apply TARK with the initial estimator $\vec{y}_0 = \mat{Q}_{\set{S}}^{-1} \vec{b}_{\set{S}}$ to solve $\min_{\vec{y}}\, \lVert \vec{b} - \mat{Q} \vec{y} \rVert^2$.
    \item Solve the triangular system $\hat{\vec{x}} = \mat{R}^+\,\overline{\vec{y}}_t$, where $\overline{\vec{y}}_t$ is the output vector from TARK.
\end{enumerate}
Then, the TARK-based solution $\hat{\vec{x}} \in \real^d$ satisfies
\begin{equation*}
    \expect\, \bigl\lVert \vec{b}  - \mat{A} \hat{\vec{x}} \bigr\rVert^2
    \leq \biggl[1 + \biggl(1 - \frac{1}{r}\biggr)^{t_{\rm b}} r + \frac{2r - 1}{t - t_{\rm b}} \biggr] \,\bigl\lVert \vec{b} - \mat{A} \vec{x}_\star \bigr\rVert^2,
\end{equation*}
where $t_{\rm b}$ and $t$ the burn-in time and final time used in TARK. In particular, setting $t_{\rm b} = t/2$, this algorithm achieves the guarantee $\expect \lVert \vec{b} - \mat{A} \hat{\vec{x}}\rVert^2 \leq (1 + \varepsilon) \cdot \lVert \vec{b} - \mat{A} \vec{x_\star} \rVert^2$
after evaluating just
\begin{equation*}
    t = r + r \log\biggl(\frac{2r}{\varepsilon}\biggr) + \frac{4r - 2}{\varepsilon} \text{ entries of $\vec{b}$.}
\end{equation*}
\end{theorem}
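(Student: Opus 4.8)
The plan is to reduce everything to a single application of Theorem~\ref{thm:variance_simple} for the orthonormalized problem $\min_{\vec{y}}\,\norm{\vec{b} - \mat{Q}\vec{y}}^2$, exploiting two special features of $\mat{Q}$. First, because $\mat{Q}$ has orthonormal columns, its squared Demmel condition number is exactly the rank, $\kappa_{\rm dem}^2 = \norm{\smash{\mat{Q}^+}}^2\,\norm{\mat{Q}}_{\rm F}^2 = 1\cdot r = r$, so the contraction factor becomes $1 - 1/r$ and the variance prefactor becomes $2r-1$. Second, since $\range(\mat{Q}) = \range(\mat{A})$, the projection $\mat{Q}\vec{y}_\star = \mat{Q}\mat{Q}^\top\vec{b}$ coincides with $\mat{A}\vec{x}_\star$, so the residual is preserved: $\norm{\vec{b} - \mat{Q}\vec{y}_\star} = \norm{\vec{b} - \mat{A}\vec{x}_\star}$. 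Note also that $\range(\mat{Q}^\top) = \real^r$, so the hypothesis $\vec{y}_0 \in \range(\mat{Q}^\top)$ of Theorem~\ref{thm:variance_simple} holds automatically, and volume sampling draws an invertible $\mat{Q}_{\set{S}}$ with probability one.

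Next I would translate the objective into the $\vec{y}$-coordinates. Because $\mat{R}$ has full row rank, $\mat{R}\mat{R}^+ = \Id_r$, hence $\mat{A}\hat{\vec{x}} = \mat{Q}\mat{R}\mat{R}^+\overline{\vec{y}}_t = \mat{Q}\overline{\vec{y}}_t$. The vector $\vec{b} - \mat{Q}\vec{y}_\star$ is orthogonal to $\range(\mat{Q})$ while $\mat{Q}(\vec{y}_\star - \overline{\vec{y}}_t) \in \range(\mat{Q})$, so the Pythagorean identity together with $\norm{\mat{Q}\vec{z}} = \norm{\vec{z}}$ gives
\[
    \norm{\vec{b} - \mat{A}\hat{\vec{x}}}^2 = \norm{\vec{b} - \mat{Q}\vec{y}_\star}^2 + \norm{\overline{\vec{y}}_t - \vec{y}_\star}^2.
\]
Thus it remains to control $\expect\,\norm{\overline{\vec{y}}_t - \vec{y}_\star}^2$. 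Applying Theorem~\ref{thm:variance_simple} conditionally on the sampled set $\set{S}$ (which fixes $\vec{y}_0$) and using $\norm{\smash{\mat{Q}^+}}^2 = 1$ yields
\[
    \expect\,\bigl[\norm{\overline{\vec{y}}_t - \vec{y}_\star}^2 \,\big|\, \set{S}\bigr] \le \bigl(1 - \tfrac{1}{r}\bigr)^{t_{\rm b}} \norm{\vec{y}_0 - \vec{y}_\star}^2 + \frac{2r-1}{t - t_{\rm b}}\,\norm{\vec{b} - \mat{Q}\vec{y}_\star}^2.
\]

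The crux is to bound the initialization error averaged over the square-volume sampling, and I expect this to be the one substantive ingredient (everything else is orthogonality bookkeeping). Here I would invoke the classical volume-sampling identity \cite{derezinski2018reverse}: for $\set{S}$ drawn from the square-volume distribution on $r$ rows, the interpolative estimator $\vec{y}_0 = \mat{Q}_{\set{S}}^{-1}\vec{b}_{\set{S}}$ satisfies $\expect\,\norm{\vec{b} - \mat{Q}\vec{y}_0}^2 = (r+1)\,\norm{\vec{b} - \mat{Q}\vec{y}_\star}^2$. Applying the same Pythagorean identity to $\vec{y}_0$ in place of $\overline{\vec{y}}_t$ and subtracting the common residual term gives $\expect\,\norm{\vec{y}_0 - \vec{y}_\star}^2 = r\,\norm{\vec{b} - \mat{Q}\vec{y}_\star}^2$. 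Taking the total expectation over $\set{S}$ and the TARK randomness, substituting $\norm{\vec{b} - \mat{Q}\vec{y}_\star}^2 = \norm{\vec{b} - \mat{A}\vec{x}_\star}^2$, and adding back the residual produces exactly
\[
    \expect\,\norm{\vec{b} - \mat{A}\hat{\vec{x}}}^2 \le \Bigl[1 + \bigl(1 - \tfrac{1}{r}\bigr)^{t_{\rm b}} r + \frac{2r-1}{t - t_{\rm b}}\Bigr]\norm{\vec{b} - \mat{A}\vec{x}_\star}^2.
\]

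Finally, for the iteration-complexity claim I would choose the burn-in window and the averaging window to force each nontrivial error term below $\varepsilon/2$. Using $\bigl(1 - 1/r\bigr)^{t_{\rm b}} \le \e^{-t_{\rm b}/r}$, the exponential term is at most $\varepsilon/2$ once $t_{\rm b} \ge r\log(2r/\varepsilon)$, and the polynomial term is at most $\varepsilon/2$ once $t - t_{\rm b} \ge (4r-2)/\varepsilon$. Summing the two window lengths gives a TARK final time of $r\log(2r/\varepsilon) + (4r-2)/\varepsilon$, and accounting for the $r$ entries of $\vec{b}$ read to form $\vec{y}_0$ yields the total access count $t = r + r\log(2r/\varepsilon) + (4r-2)/\varepsilon$. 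By Corollary~\ref{cor:need_polynomial}, this matches the optimal $\Omega(d/\varepsilon)$ scaling up to the additive logarithmic burn-in term and the replacement of $d$ by the rank $r$.
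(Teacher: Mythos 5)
Your proposal is correct and follows essentially the same route as the paper's proof: the Pythagorean decomposition $\norm{\vec{b}-\mat{Q}\vec{y}}^2 = \norm{\vec{b}-\mat{Q}\vec{y}_\star}^2 + \norm{\vec{y}-\vec{y}_\star}^2$ from the orthonormality of $\mat{Q}$, the Derezi\'nski--Warmuth volume-sampling bound $\expect\,\norm{\vec{y}_0-\vec{y}_\star}^2 \le r\,\norm{\vec{b}-\mat{Q}\vec{y}_\star}^2$, and Theorem~\ref{thm:variance_simple} applied conditionally with $\kappa_{\rm dem}^2 = r$. Your explicit verification of the iteration count (which the paper leaves implicit) and the observation that $\mat{R}\mat{R}^+ = \Id_r$ are welcome additional details.
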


\begin{proof}
Because $\mat{Q}$ has orthonormal columns, every vector $\vec{y} \in \real^r$ satisfies
\begin{equation*}
    \bigl\lVert \vec{b} - \mat{Q} \vec{y} \bigr\rVert^2
    = \bigl\lVert \vec{b}  - \mat{Q} \vec{y}_\star \bigr\rVert^2 + \bigl\lVert \mat{Q} \vec{y} - \mat{Q} \vec{y}_\star \bigr\rVert^2 
    = \bigl\lVert \vec{b} - \mat{Q} \vec{y}_\star \bigr\rVert^2 + \norm{\vec{y} - \vec{y}_\star}^2  \quad \text{for }\vec{y}_\star = \mat{Q}^\top\vec{b}.
\end{equation*}
Derezi{\'n}ski \& Warmuth \cite[Thm.~8]{derezinski2018reverse} demonstrates that $\vec{y}_0 = \mat{Q}_{\set{S}}^{-1} \vec{b}_{\set{S}}$ satisfies
\begin{equation*}
    \expect\,\norm{\vec{b} - \mat{Q} \vec{y}_0}^2 \leq (r + 1)\, \norm{\vec{b} - \mat{Q} \vec{y}_\star}^2
    \text{ and equivalently }
    \expect\, \norm{ \vec{y}_0 -  \vec{y}_\star }^2
    \leq r\, \norm{\vec{b} - \mat{Q} \vec{y}_\star}^2.
\end{equation*}
Conditional on $\vec{y}_0$, TARK achieves a fast convergence rate
\begin{align*}
    \expect\,\Bigl[\, \bigl\lVert  \vec{b} - \mat{Q} \overline{\vec{y}}_t \bigr\rVert^2 \, \Big| \, \vec{y}_0 \Bigr]
    &=  \bigl\lVert \vec{b} - \mat{Q} \vec{y}_\star \bigr\rVert^2
    + \expect \Bigl[\, \bigl\lVert \overline{\vec{y}}_t - \vec{y}_\star \bigr\rVert^2 \, \Big| \, \vec{y}_0 \Bigr]
    \\
    &\leq 
     \biggl[1 + \frac{2r - 1}{t - t_{\rm b}}\biggr] \,\bigl\lVert  \vec{b} - \mat{Q} \vec{y}_\star \bigr\rVert^2 + 
     \biggl(1 - \frac{1}{r} \biggr)^{t_{\rm b}} \,\norm{\vec{y}_0 - \vec{y}_\star}^2.
\end{align*}
By averaging over $\vec{y}_0$, the overall convergence rate is
\begin{equation*}
    \expect\, \bigl\lVert  \vec{b} - \mat{Q} \overline{\vec{y}}_t \bigr\rVert^2 
    \leq \biggl[1 + \biggl(1 - \frac{1}{r}\biggr)^{t_{\rm b}} r + \frac{2r - 1}{t - t_{\rm b}} \biggr] \,\bigl\lVert \vec{b} -  \mat{Q} \vec{y}_\star \bigr\rVert^2.
\end{equation*}
Since $\bigl\lVert  \vec{b} - \mat{Q} \overline{\vec{y}}_t \bigr\rVert^2 = \bigl\lVert  \vec{b} - \mat{A} \hat{\vec{x}} \bigr\rVert^2$ and $\bigl\lVert \vec{b} -  \mat{Q} \vec{y}_\star \bigr\rVert^2 = \bigl\lVert  \vec{b} - \mat{A} \vec{x}_\star \bigr\rVert^2$, this completes the proof.
\end{proof}

The problem of approximately solving a least-squares problem from a small number of entry evaluations of the vector $\vec{b}$ has also received recent attention in the context of \emph{active learning} \cite{chen2019active,musco2022active}.
Existing approaches achieve the guarantee $\lVert  \vec{b} -  \mat{A} \hat{\vec{x}} \rVert^2 \leq (1 + \varepsilon) \, \lVert  \vec{b} - \mat{A} \vec{x_\star} \rVert^2$ with high probability after accessing just $\mathcal{O}(r/\varepsilon)$ \cite{chen2019active} or $\mathcal{O}(r \log r + r/\varepsilon)$ \cite{woodruff2014sketching,derezinski2018leveraged} entries of $\vec{b}$.
Compared to this previous work, Proposition~\ref{prop:active_learning} attains nearly the optimal rate and is among the simplest and most explicit bounds for active linear regression methods.

\section{Proofs for ridge regression}

\label{app:TARK-RR_proof}
This section proves the \rkrr{} and TA\rkrr{} error bounds. The analysis roughly parallels the analysis in Section~\ref{sec:proof}.
However, the proof of Theorem~\ref{thm:rkrr} requires a new strategy, since there is not a simple one-step recursion bounding $\expect\, \norm{\vec{x}_{t+1} - \vec{x}_\mu}^2$
in terms of $\expect\, \norm{\vec{x}_t - \vec{x}_\mu}^2$.
Instead, it is necessary to use a bias--variance decomposition inspired by \cite{defossez2015averaged, jain2018parallelizing}.

\begin{lemma}[Multi-step expectations]
\label{lem:rkrr_2}
    The RK-RR iteration \eqref{eq:alternating} satisfies
    \begin{equation*}
        \expect\, \bigl[\vec{x}_s - \vec{x}_{\mu} \, \big|\, \vec{x}_r \bigr]
        = \mu^{s-r} \Biggl[\mathbf{I} - \frac{\mat{A}^\top \mat{A}}{\norm{\mat{A}}_{\rm F}^2}\Biggr]^{s-r} \bigl( \vec{x}_r - \vec{x}_{\mu} \bigr),
    \end{equation*}
    for any $r < s$,
    where the expectation averages over the random indices $i_r, \ldots, i_{s-1}$.
\end{lemma}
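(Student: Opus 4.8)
The plan is to mimic the proof of Lemma~\ref{lem:expectations}: establish a one-step recursion for $\expect[\vec{x}_{t+1} - \vec{x}_\mu \mid \vec{x}_t]$ and then iterate it. First I would collapse the two half-steps of the RK-RR update \eqref{eq:alternating} into a single affine map,
\begin{equation*}
    \vec{x}_{t+1} = \mu\biggl[\mathbf{I} - \frac{\vec{a}_{i_t}^{\vphantom{\top}} \vec{a}_{i_t}^\top}{\norm{\vec{a}_{i_t}}^2}\biggr] \vec{x}_t + \mu\,\frac{b_{i_t}}{\norm{\vec{a}_{i_t}}^2}\,\vec{a}_{i_t}.
\end{equation*}
Taking the conditional expectation over the random index $i_t$ with the sampling probabilities \eqref{eq:rk-sample}, and using the identities $\expect[\vec{a}_{i_t} \vec{a}_{i_t}^\top / \norm{\vec{a}_{i_t}}^2] = \mat{A}^\top \mat{A} / \norm{\mat{A}}_{\rm F}^2$ and $\expect[b_{i_t} \vec{a}_{i_t} / \norm{\vec{a}_{i_t}}^2] = \mat{A}^\top \vec{b} / \norm{\mat{A}}_{\rm F}^2$, yields
\begin{equation*}
    \expect[\vec{x}_{t+1} \mid \vec{x}_t] = \mu\biggl[\mathbf{I} - \frac{\mat{A}^\top \mat{A}}{\norm{\mat{A}}_{\rm F}^2}\biggr] \vec{x}_t + \mu\,\frac{\mat{A}^\top \vec{b}}{\norm{\mat{A}}_{\rm F}^2}.
\end{equation*}

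The crux of the argument—and the step I expect to require the most care—is showing that subtracting $\vec{x}_\mu$ converts the inhomogeneous term into exactly $-\mu[\mathbf{I} - \mat{A}^\top\mat{A}/\norm{\mat{A}}_{\rm F}^2]\vec{x}_\mu$, so that the constant cancels and a clean one-step contraction in $\vec{x}_t - \vec{x}_\mu$ remains. This is precisely where the relationship $\lambda = (1-\mu)/\mu \cdot \norm{\mat{A}}_{\rm F}^2$ enters: the ridge solution $\vec{x}_\mu$ satisfies the regularized normal equations $(\mat{A}^\top\mat{A} + \lambda\,\mathbf{I})\vec{x}_\mu = \mat{A}^\top \vec{b}$, and after multiplying by $\mu/\norm{\mat{A}}_{\rm F}^2$ and rearranging one verifies the needed identity $\mu\,\mat{A}^\top\vec{b}/\norm{\mat{A}}_{\rm F}^2 - \vec{x}_\mu = -\mu[\mathbf{I} - \mat{A}^\top\mat{A}/\norm{\mat{A}}_{\rm F}^2]\vec{x}_\mu$. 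Conceptually, unlike the unregularized case in Lemma~\ref{lem:expectations}, here $\vec{x}_\mu$ is \emph{not} a fixed point of the deterministic iteration map; it becomes the fixed point only once the shrinkage factor $\mu$ is folded in, which is exactly the role played by the $\lambda$–$\mu$ identity.

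Having established the one-step recursion $\expect[\vec{x}_{t+1} - \vec{x}_\mu \mid \vec{x}_t] = \mu[\mathbf{I} - \mat{A}^\top\mat{A}/\norm{\mat{A}}_{\rm F}^2](\vec{x}_t - \vec{x}_\mu)$, the remaining work is purely mechanical. Conditioning on $\vec{x}_r$ and applying the tower property of conditional expectation, I would push the expectation over $i_r, \ldots, i_{s-1}$ through the linear recursion one index at a time, exactly as in the proof of Lemma~\ref{lem:expectations}, to obtain $\expect[\vec{x}_{t+1} - \vec{x}_\mu \mid \vec{x}_r] = \mu[\mathbf{I} - \mat{A}^\top\mat{A}/\norm{\mat{A}}_{\rm F}^2]\,\expect[\vec{x}_t - \vec{x}_\mu \mid \vec{x}_r]$ for each $t \in \{r, \ldots, s-1\}$. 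Iterating this from $t = r$ up to $t = s-1$ accumulates $s-r$ factors of both the scalar $\mu$ and the matrix $[\mathbf{I} - \mat{A}^\top\mat{A}/\norm{\mat{A}}_{\rm F}^2]$, producing the claimed formula with prefactor $\mu^{s-r}$.
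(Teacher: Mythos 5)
Your proposal is correct and follows essentially the same route as the paper: both arguments reduce to the one-step identity $\expect[\vec{x}_{t+1}-\vec{x}_\mu\mid\vec{x}_t]=\mu\bigl[\mathbf{I}-\mat{A}^\top\mat{A}/\norm{\mat{A}}_{\rm F}^2\bigr](\vec{x}_t-\vec{x}_\mu)$, with the constant term cancelled via the regularized normal equations $\mat{A}^\top(\vec{b}-\mat{A}\vec{x}_\mu)=\lambda\vec{x}_\mu$ and the relation $\lambda=(1-\mu)/\mu\cdot\norm{\mat{A}}_{\rm F}^2$, followed by the tower property and iteration. The only cosmetic difference is that the paper subtracts $\vec{x}_\mu$ before averaging over $i_t$ while you average first and then verify the cancellation; your observation that $\vec{x}_\mu$ is the fixed point of the expected map only after the shrinkage factor $\mu$ is included is accurate.
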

\begin{proof}
For any $t \geq 0$, rewrite the RK-RR iteration \eqref{eq:alternating} as
\begin{equation} \label{eq:kr-rr-rewritten}
    \vec{x}_{t+1} - \vec{x}_{\mu}
    = \mu\, \biggl[\mathbf{I} - \frac{\vec{a}_{i_t} \vec{a}_{i_t}^\top}{\lVert \vec{a}_{i_t} \rVert^2}\biggr] \bigl(\vec{x}_t - \vec{x}_{\mu}\bigr)
    + \mu\, \frac{b_{i_t} - \vec{a}_{i_t}^\top \vec{x}_{\mu}}{\norm{\vec{a}_{i_t}}^2} \vec{a}_{i_t}
    - (1 - \mu) \vec{x}_{\mu}.
\end{equation}
By averaging over the random index $i_t$,
\begin{equation*}
    \expect\, \bigl[ \vec{x}_{t+1}\, \big|\, \vec{x}_t \bigr]
    = \mu\, \Biggl[\mathbf{I} - \frac{\mat{A}^\top \mat{A}}{\norm{\mat{A}}_{\rm F}^2}\Biggr] \vec{x}_t
    + \mu\, \frac{\vec{A}^\top \bigl(\vec{b} - \mat{A} \vec{x}_\mu\bigr)}{\norm{\mat{A}}_{\rm F}^2}
    - (1 - \mu) \vec{x}_{\mu}.
\end{equation*}
The ridge-regularized solution $\vec{x}_{\mu}$ is characterized by $\vec{A}^\top \bigl(\vec{b} - \mat{A} \vec{x}_\mu\bigr) = \frac{1-\mu}{\mu} \norm{\mat{A}}_{\rm F}^2 \,\vec{x}_\mu$, so the last two terms cancel. 
Hence, by averaging over the random indices $i_r, \ldots, i_{s-1}$,
\begin{equation}
\label{eq:TARK-RR_exp_x}
    \expect\,\bigl[\vec{x}_{t+1}\bigr]
    = \mu\, \Biggl[\mathbf{I} - \frac{\mat{A}^\top \mat{A}}{\norm{\mat{A}}_{\rm F}^2}\Biggr] \expect\,\bigl[\vec{x}_t\bigr]
    \quad \text{for each } t \in \{r, \ldots, s-1\}.
\end{equation}
The result follows by chaining these equations together.
\end{proof}

\begin{lemma}[Demmel condition number bound]
\label{lem:demmel_2}
For any $\vec{x}\in\operatorname{range}(\mat{A}^\top)$ and any $s \geq 0$,
\begin{equation*}
        \vec{x}^\top  
        \Biggl[\mathbf{I} - \frac{\mat{A}^\top \mat{A}}{\norm{\mat{A}}_{\rm F}^2}\Biggr]^s
        \vec{x}
        \leq (1 - \kappa_{\rm dem}^{-2})^s\, \lVert \vec{x} \rVert^2.
    \end{equation*}
\end{lemma}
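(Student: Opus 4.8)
The plan is to diagonalize the symmetric matrix $\mathbf{I} - \mat{A}^\top\mat{A}/\norm{\mat{A}}_{\rm F}^2$ using the singular value decomposition of $\mat{A}$, exactly as in the proof of Lemma~\ref{lem:demmel}, but carried out for a general range vector rather than the specific difference $\vec{x}_r - \vec{x}_\star$. Writing the thin SVD as $\mat{A} = \mat{U}\mat{\Sigma}\mat{V}^\top$ with singular values $\sigma_1 \geq \cdots \geq \sigma_\rho > 0$, where $\rho = \operatorname{rank}(\mat{A})$, the right singular vectors $\vec{v}_1, \ldots, \vec{v}_\rho$ form an orthonormal basis of $\range(\mat{A}^\top)$ and are eigenvectors of $\mat{A}^\top\mat{A}$ with eigenvalues $\sigma_i^2$.

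Since $\vec{x} \in \range(\mat{A}^\top)$, I expand $\vec{x} = \sum_{i=1}^\rho c_i \vec{v}_i$. Each $\vec{v}_i$ is then an eigenvector of the matrix in the statement with eigenvalue $1 - \sigma_i^2/\norm{\mat{A}}_{\rm F}^2$, so the quadratic form collapses to a diagonal sum,
\[
    \vec{x}^\top\Biggl[\mathbf{I} - \frac{\mat{A}^\top\mat{A}}{\norm{\mat{A}}_{\rm F}^2}\Biggr]^s\vec{x} = \sum\nolimits_{i=1}^\rho c_i^2 \biggl(1 - \frac{\sigma_i^2}{\norm{\mat{A}}_{\rm F}^2}\biggr)^s,
\]
while $\norm{\vec{x}}^2 = \sum_{i=1}^\rho c_i^2$ by orthonormality. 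The remaining work is a uniform two-sided bound on each eigenvalue. The upper bound $\sigma_i^2 \leq \sigma_1^2 \leq \sum_j \sigma_j^2 = \norm{\mat{A}}_{\rm F}^2$ gives $1 - \sigma_i^2/\norm{\mat{A}}_{\rm F}^2 \geq 0$, and the lower bound uses $\sigma_i \geq \sigma_\rho = 1/\norm{\smash{\mat{A}^+}}$, so that $\sigma_i^2/\norm{\mat{A}}_{\rm F}^2 \geq (\norm{\smash{\mat{A}^+}}^2\norm{\mat{A}}_{\rm F}^2)^{-1} = \kappa_{\rm dem}^{-2}$. Together these place every factor in the interval $[0,\, 1 - \kappa_{\rm dem}^{-2}]$, hence
\[
    \biggl(1 - \frac{\sigma_i^2}{\norm{\mat{A}}_{\rm F}^2}\biggr)^s \leq \bigl(1 - \kappa_{\rm dem}^{-2}\bigr)^s
\]
term by term. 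Substituting into the diagonal sum and factoring out $(1-\kappa_{\rm dem}^{-2})^s$ yields the claim.

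The main obstacle — indeed the only point requiring any care — is the nonnegativity of each factor. If $\sigma_1^2$ could exceed $\norm{\mat{A}}_{\rm F}^2$, a factor could turn negative and the monotonicity used to raise it to the power $s$ would break down; the elementary fact that the Frobenius norm dominates the spectral norm closes this gap and guarantees the base lies in $[0,1)$. Everything else is the standard eigendecomposition argument, and the proof is word-for-word the same as Lemma~\ref{lem:demmel} once the explicit vector $\vec{x}_r - \vec{x}_\star$ is replaced by an arbitrary $\vec{x} \in \range(\mat{A}^\top)$, which is precisely the generality demanded by the RK-RR analysis.
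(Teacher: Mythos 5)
Your proof is correct and follows exactly the route the paper intends: the paper's own proof is the single line ``expand $\vec{x}$ in $\mat{A}$'s right singular vectors,'' and you have simply filled in the details of that eigendecomposition argument, including the two eigenvalue bounds $0 \leq 1 - \sigma_i^2/\norm{\mat{A}}_{\rm F}^2 \leq 1 - \kappa_{\rm dem}^{-2}$ that make the term-by-term power bound legitimate. No differences in approach and no gaps.
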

\begin{proof}
The result follows by expanding $\vec{x}$ in $\mat{A}$'s right singular vectors. 
\end{proof}

\begin{proof}[Proof of Theorem~\ref{thm:rkrr}]
To analyze the RK-RR iteration \eqref{eq:kr-rr-rewritten}, introduce a bias sequence $\vec{m}_t$ and a variance sequence $\vec{v}_t$ that are recursively defined by
\begin{align*}
& \vec{m}_0 = \vec{x}_0 - \vec{x}_\mu,
&& \vec{m}_{t+1} = \mu\, \biggl[\mathbf{I} - \frac{\vec{a}_{i_t}^{\vphantom{\top}} \vec{a}_{i_t}^\top}{\lVert \vec{a}_{i_t} \rVert^2}\biggr] \,\vec{m}_t, \\
& \vec{v}_0 = \vec{0},
&& \vec{v}_{t+1} = \mu\, \biggl[\mathbf{I} - \frac{\vec{a}_{i_t}^{\vphantom{\top}} \vec{a}_{i_t}^\top}{\lVert \vec{a}_{i_t} \rVert^2}\biggr] \,\vec{v}_t + \mu \frac{b_{i_t} - \vec{a}_{i_t}^\top \vec{x}_{\mu}}{\norm{\vec{a}_{i_t}}^2} \vec{a}_{i_t}
    - (1 - \mu) \vec{x}_{\mu}.
\end{align*}
By mathematical induction, the sequences satisfy $\vec{x}_t - \vec{x}_\mu = \vec{m}_t + \vec{v}_t$ for each $t \geq 0$, and also $\vec{m}_t, \vec{v}_t \in  \operatorname{range}(\mat{A}^\top)$ for each $t \geq 0$. Intuitively, $\vec{m}_t$ captures the error due to the initial bias $\vec{x}_0 - \vec{x}_\mu$, and $\vec{v}_t$ captures the remaining error.

Using the bias--variance decomposition, it follows that $\lVert \vec{x}_t - \vec{x}_\mu \rVert^2 \leq 2\lVert \vec{m}_t \rVert^2 + 2 \lVert \vec{v}_t \rVert^2$, and hence
\begin{equation*}
    \expect\, \bigl\lVert \vec{x}_t - \vec{x}_\mu \bigr\rVert^2
    \leq \underbrace{2 \expect\, \norm{\vec{m}_t}^2}_{\text{square bias term}} + \underbrace{2 \expect\, \norm{\vec{v}_t}^2}_{\text{variance term}}.
\end{equation*}
The rest of the proof analyzes the square bias and variance terms separately.

To bound the square bias term, average over the random index $i_t$ and apply Lemma~\ref{lem:demmel_2}:
\begin{align*}
    \expect\,\bigl[\,\norm{\vec{m}_{t+1}}^2\, \big| \, \vec{m}_t \bigr] 
    &= \mu^2\, \expect\,\biggl[\vec{m}_t^\top \biggl(\mathbf{I} - \frac{\vec{a}_{i_t} \vec{a}_{i_t}^\top}{\lVert \vec{a}_{i_t} \rVert^2}\biggr)\, \vec{m}_t \, \biggl|\, \vec{m}_t \biggr] \\
    &
    = \mu^2\, \vec{m}_t^\top \biggl[\mathbf{I} - \frac{\mat{A}^\top \mat{A}}{\norm{\mat{A}}_{\rm F}^2}\biggr]\, \vec{m}_t
    \leq \mu^2\,  (1 - \kappa_{\rm dem}^{-2})\, \norm{\vec{m}_t}^2.
\end{align*}
Therefore, by averaging over the random indices $i_0, \ldots, i_{t-1}$,
\begin{equation*}
    \expect\, \norm{\vec{m}_{t+1}}^2
    \leq \mu^2\,  (1 - \kappa_{\rm dem}^{-2})\, \expect\, \norm{\vec{m}_t}^2,
    \quad \text{for each } t \in \{0, \ldots, r-1\}.
\end{equation*}
This equation implies
\begin{equation*}
    \expect\, \norm{\vec{m}_t}^2 \leq [\mu^{2}\,  (1 - \kappa_{\rm dem}^{-2})]^{\,t}\, \norm{\vec{x}_0 - \vec{x}_\mu}^2,
\end{equation*}
which is an exponentially decreasing bound on the square bias.

The analysis of the variance is more delicate. Since $\vec{v}_t$ follows the same recurrence as $\vec{x}_t$, the relation \eqref{eq:TARK-RR_exp_x} from the proof of Lemma~\ref{lem:rkrr_2} can also be applied to $\vec{v}_t$, yielding
\begin{equation*}
    \expect\bigl[\vec{v}_{t+1}\bigr]
    = \mu\, \Biggl[\mathbf{I} - \frac{\mat{A}^\top \mat{A}}{\norm{\mat{A}}_{\rm F}^2}\Biggr] \expect\,\bigl[\vec{v}_t\bigr]
    \quad \text{for each } t \geq 0.
\end{equation*}
This condition together with the initial condition $\vec{v}_0 = \vec{0}$ shows that $\expect\,[\vec{v}_{t+1}] = \vec{0}$ for each $t\ge 0$,
and consequently
\begin{equation}
\label{eq:use_me}
    \expect\, \norm{\vec{v}_{t+1}}^2
    \leq \expect\, \norm{\,\vec{v}_{t+1}
    + (1-\mu) \vec{x}_{\mu}\,}^2 
    \quad \text{for each } t\geq 0.
\end{equation}

Next, calculate
\begin{align*}
    \norm{\,\vec{v}_{t+1} + (1-\mu) \vec{x}_{\mu}\,}^2
    &= \norm{\,\mu\, \biggl[\mathbf{I} - \frac{\vec{a}_{i_t}^{\vphantom{\top}} \vec{a}_{i_t}^\top}{\lVert \vec{a}_{i_t} \rVert^2}\biggr] \vec{v}_t + \mu \frac{b_{i_t} - \vec{a}_{i_t}^\top \vec{x}_{\mu}}{\norm{\vec{a}_{i_t}}^2} \vec{a}_{i_t}}^2 \\
    &= \mu^2\, \vec{v}_t^\top \biggl[\mathbf{I} - \frac{\vec{a}_{i_t}^{\vphantom{\top}} \vec{a}_{i_t}^\top}{\lVert \vec{a}_{i_t} \rVert^2}\biggr] \,\vec{v}_t + \mu^2\, \frac{\bigl|b_{i_t} - \vec{a}_{i_t}^\top \vec{x}_{\mu}\bigr|^2}{\norm{\vec{a}_{i_t}}^2} \\
    &\leq \mu^2\, \norm{\vec{v}_t}^2 + \mu^2\, \frac{\bigl|b_{i_t} - \vec{a}_{i_t}^\top \vec{x}_{\mu}\bigr|^2}{\norm{\vec{a}_{i_t}}^2}.
\end{align*}
The first line uses the definition of $\vec{v}_{t+1}$, and the second and third lines the fact that $\mathbf{I} - \vec{a}_{i_t}^{\vphantom{\top}} \vec{a}_{i_t}^\top / \lVert \vec{a}_{i_t} \rVert^2$ is an orthogonal projection matrix that is idempotent and annihilates the vector $\vec{a}_{i_t}$.

By averaging over the random index $i_t$, it follows
\begin{equation*}
    \expect\, \bigl[\,\norm{\,\vec{v}_{t+1} + (1-\mu) \vec{x}_{\mu}\,}^2 \, \big| \, \vec{v}_t \bigr]
    \leq \mu^2\, \norm{\vec{v}_t}^2 
    + \mu^2\, \frac{\norm{\vec{b} - \mat{A}^\top \vec{x}_{\mu}}^2}{\norm{\mat{A}}_{\rm F}^2}.
\end{equation*}
Moreover, by averaging over the random indices $i_0, \ldots, i_{t-1}$ and using \eqref{eq:use_me},
\begin{equation*}
    \expect\, \norm{\,\vec{v}_{t+1}\,}^2
    \leq \mu^2\, \expect \norm{\vec{v}_t}^2 
    + \mu^2\, \frac{\norm{\vec{b} - \mat{A}^\top \vec{x}_{\mu}}^2}{\norm{\mat{A}}_{\rm F}^2},
    \quad \text{for each } t \geq 0.
\end{equation*}
This equation leads to a simple bound on the variance
\begin{equation*}
    \expect\, \norm{\vec{v}_t}^2\, \leq \frac{\mu^2}{1 - \mu^2} \,\frac{\norm{\vec{b} - \mat{A}^\top \vec{x}_{\mu}}^2}{\norm{\mat{A}}_{\rm F}^2},
\end{equation*}
which follows because $\sum\nolimits_{s=1}^{\infty} \mu^{2s} = \mu^2 / (1 - \mu^2)$.
The stated result follows from the definition of $\lambda$.
\end{proof}
\begin{proof}[Proof of Theorem~\ref{thm:variance_mu}]
Start by decomposing the mean square error as follows:
\begin{equation*}
    \expect\, \bigl\lVert\, \overline{\vec{x}}_t - \vec{x}_\mu \, \bigr\rVert^2
    = \frac{1}{(t - t_{\rm b})^2}\sum\nolimits_{r,s=t_{\rm b}}^{t-1} \expect\, \bigl[(\vec{x}_r - \vec{x}_\mu)^\top (\vec{x}_s - \vec{x}_\mu) \bigr].
\end{equation*}
Next analyze the terms $\expect\, \bigl[(\vec{x}_r - \vec{x}_\mu)^\top (\vec{x}_s - \vec{x}_\mu) \bigr]$ for $r \leq s$ using Lemmas~\ref{lem:rkrr_2} and \ref{thm:rkrr}:
\begin{align*}
    \expect\, \bigl[(\vec{x}_r - \vec{x}_\mu)^\top (\vec{x}_s - \vec{x}_\mu) \bigr]
    &= \expect\, \bigl[(\vec{x}_r - \vec{x}_\mu)^\top \expect\,\bigl[\vec{x}_s - \vec{x}_\mu \, \big|\,  \vec{x}_r \bigr] \bigr] \\
    &= \mu^{s-r}\, \expect\, \Biggl[\bigl(\vec{x}_r - \vec{x}_\mu \bigr)^\top \Biggl[\mathbf{I} - \frac{\mat{A}^\top \mat{A}}{\norm{\mat{A}}_{\rm F}^2}\Biggr]^{s-r} (\vec{x}_r - \vec{x}_\mu) \Biggr] \\
    &\leq \mu^{s-r}\, \expect \,\lVert \vec{x}_r- \vec{x}_\mu \rVert^2 \\
    &\leq \underbrace{2 \mu^{r + s}\, (1 - \kappa_{\rm dem}^{-2})^r \, \bigl\lVert \vec{x}_0 - \vec{x}_\mu \bigr\rVert^2}_{\text{term A}}
    + \underbrace{\frac{2 \mu^{s-r+1}}{\lambda\,(1 + \mu)} \, \lVert \vec{b} - \mat{A} \vec{x}_\mu \rVert^2}_{\text{term B}}.
\end{align*}
By bounding term A uniformly as 
\begin{equation*}
2 \mu^{r + s}\, (1 - \kappa_{\rm dem}^{-2})^r \, \bigl\lVert \vec{x}_0 - \vec{x}_\mu \bigr\rVert^2 \leq 2 \bigl[\mu^2 (1 - \kappa_{\rm dem}^{-2})\bigr]^{\,t_{\rm b}} \, \bigl\lVert \vec{x}_0 - \vec{x}_\mu \bigr\rVert^2
\end{equation*}
and explicitly averaging over term B, it follows
\begin{align*}
    \expect\, \bigl\lVert\, \overline{\vec{x}}_t - \vec{x}_\star \,\bigr\rVert^2
    &= \frac{1}{(t - t_{\rm b})^2}\sum\nolimits_{r,s=t_{\rm b}}^{t-1} \expect\, \bigl[(\vec{x}_r - \vec{x}_\mu)^\top (\vec{x}_s - \vec{x}_\mu) \bigr] \\
    &\leq  2\, \bigl[\mu^2\, (1 - \kappa_{\rm dem}^{-2})\bigr]^{\,t_{\rm b}}\, \bigl\lVert \vec{x}_0 - \vec{x}_\mu \bigr\rVert^2
    + \frac{2 \mu \, \lVert \vec{b} - \mat{A} \vec{x_\mu} \rVert^2}{\lambda (1 + \mu)(t - t_{\rm b})^2}\sum\nolimits_{r,s=t_{\rm b}}^{t-1} \mu^{\,|s-r|}.
\end{align*}
Last, apply the coarse bound
\begin{equation*}
    \sum\nolimits_{r,s=t_{\rm b}}^{t-1} \mu^{\,|s-r|}
    \leq (t - t_{\rm b}) \biggl[-1 + 2 \sum\nolimits_{s=0}^{\infty} \mu^s\biggr]
    = (t - t_{\rm b}) \frac{1 + \mu}{1 - \mu},
\end{equation*}
which completes the proof.
\end{proof}

% \section{Example Appendix Section}
% \label{app1}

% Appendix text.

% %% For citations use: 
% %%       \cite{<label>} ==> [1]

% %%
% Example citation, See \cite{lamport94}.

%% If you have bib database file and want bibtex to generate the
%% bibitems, please use
%%
\bibliographystyle{elsarticle-num} 
\bibliography{references}

%% else use the following coding to input the bibitems directly in the
%% TeX file.

%% Refer following link for more details about bibliography and citations.
%% https://en.wikibooks.org/wiki/LaTeX/Bibliography_Management

\end{document}